\renewcommand{\leq}{\leqslant}
\renewcommand{\geq}{\geqslant}
\DeclareMathOperator{\ab}{ab}
\DeclareMathOperator{\Ann}{Ann}
\DeclareMathOperator{\Aut}{Aut}
\DeclareMathOperator{\Cen}{Z}
\DeclareMathOperator{\Fix}{Fix}
\DeclareMathOperator{\id}{id}
\DeclareMathOperator{\Ker}{Ker}
\DeclareMathOperator{\Rad}{Rad}
\DeclareMathOperator{\Soc}{Soc}
\newcommand{\N}{\mathbb{N}}
\newcommand{\Z}{\mathbb{Z}}
\numberwithin{equation}{section}
\numberwithin{figure}{section}
\numberwithin{table}{section}
\newtheorem{thm}{Theorem}[section]
\newtheorem{cor}[thm]{Corollary}
\newtheorem{pro}[thm]{Proposition}
\theoremstyle{definition} 
\newtheorem{defn}[thm]{Definition}
\newtheorem{rem}[thm]{Remark}
\newtheorem{exa}[thm]{Example}
\title[Radical and weight of skew braces \ldots]{Radical and weight of skew braces and their
applications to structure groups of solutions of the Yang--Baxter equation}
\author{E. Jespers, {\L}. Kubat, A. Van Antwerpen, L. Vendramin}
\address[E. Jespers, {\L}. Kubat, A. Van Antwerpen]{Department of Mathematics, Vrije Universiteit Brussel, Pleinlaan 2, 1050 Brussel}
\email{eric.jespers@vub.be}
\email{lukasz.kubat@vub.be}
\email{arne.van.antwerpen@vub.be}
\address[L. Vendramin]{IMAS--CONICET and Departamento de Matem\'atica, FCEN, Universidad de Buenos Aires, Pabell\'on~1,
Ciudad Universitaria, C1428EGA, Buenos Aires, Argentina}
\email{lvendramin@dm.uba.ar}
\subjclass[2010]{Primary:16T25; Secondary: 16Nxx}
\keywords{Yang--Baxter equation, solution, skew brace, radical, weight, Wiegold problem, Schur theorem}
\begin{document}

\begin{abstract}
 %
 %
    We define the radical and weight of a skew left brace and provide some basic properties of these notions.
    In particular, we obtain a Wedderburn type decomposition for Artinian skew left braces. Furthermore, we prove analogues
    of a theorem of Wiegold, a theorem of Schur and its converse in the context of skew left braces. Finally, we apply these
    results to detect torsion in the structure group of a finite bijective non-degenerate set-theoretic solution of the
    Yang--Baxter equation. 
\end{abstract}

\maketitle

\section{Introduction}\label{sec:intro}

Radicals form an important tool in the structure of rings. The Jacobson radical was introduced by Jacobson in~\cite{MR12271}.
Later, Amitsur and Kurosh studied the general theory of radicals in~\cite{MR50563,MR59256,MR59257,MR0057236}.
Recently radical rings found new applications to the theory of the Yang--Baxter equation (YBE). 

The combinatorial version of the YBE has been studied intensively since the papers of Drinfeld~\cite{MR1183474},
Etingof, Schedler and Soloviev~\cite{MR1722951} and Gateva--Ivanova and Van den Bergh~\cite{MR1637256}. In~\cite{MR2278047} Rump found
an unexpected connection between involutive non-degenerate solutions and Jacobson radical rings as each radical ring produces an involutive non-degenerate solution.
More precisely, if $A$ is a radical ring, which means that the operation \[x\circ y=x+xy+y\] turns the ring $A$ into a group, then
\[r\colon A\times A\to A\times A,\quad r(x,y)=(-x+x\circ y,(-x+x\circ y)'\circ x\circ y),\]
where $z'$ denotes the inverse of the element $z\in A$ with respect to the circle operation, is an involutive  non-degenerate solution of the YBE. 

This is just the starting point of a recent new development in the theory of radical rings. One does not really need radical rings
to produce solutions of the YBE with Rump's formula. This is how one finds the notion of a left brace. Thus (left) braces appear naturally as the
generalizations of radical rings that allow us to produce solutions of the YBE by means of Rump's formula. Actually by a result of Rump~\cite{MR2278047}
radical rings correspond to two-sided braces, i.e., left braces that also are right braces. It turns out that, as
it was proved in~\cite{MR1722951}, all involutive non-degenerate solutions of the YBE are restrictions of those solutions obtained from left braces.
This means that left braces are the right algebraic structure needed to understand the combinatorial version of the YBE.
Since braces are groups with an additional abelian structure and radical rings provide natural examples of braces,
it is clear that methods of both group and ring theory will play a crucial role in investigations concerning braces.

In~\cite{MR3647970} braces were generalized to skew braces in order  to
study non-involutive bijective solutions of the YBE. Again one can use skew braces to restate the results of Soloviev~\cite{MR1809284} and
Lu, Yan and Zhu~\cite{MR1769723} related to non-involutive solutions of the YBE. Skew braces and some of their applications are intensively studied, see for example \cite{MR3982254, MR3834774, MR4017867, MR3763907, MR4033084, MR3977806,MR3814340}. 
For example Konovalov, Smoktunowicz and Vendramin  in \cite{KSV2018} initiated the investigations on the solvability of a brace via the introduction of some radicals, namely the Wedderburn radical and the solvable radical.
In this paper we continue along this line and introduce a new  radical, which leads to a nice description of the semisimple quotient.

Our radical has behaves nice as the quotient is a product of simples. We define 
the radical of a skew left brace as the intersection of all of its maximal ideals, and the weight 
as the minimal number of generators needed to generate the skew brace as an ideal. We explore
the connections between these notions in the context of skew braces. As an application
of our developments we prove, for example, 
that Artinian perfect skew left braces and finite skew left braces of square-free order both have always weight one. 

Our results on the structure of skew braces 
and some of the techniques developed have a natural application to solutions of the YBE. 
At the end of the paper, our tools are powerful enough to allow us to prove 
analogs of a group theoretical result of Schur and its
converse. These results 
are interesting on their own, but they are used in Theorem~\ref{thm:torsion} to answer
a question posed in~\cite{MR3974961} related to the torsion of the structure group of a finite solution. Several examples show
that our result on the torsion of the structure group of a solution is sharp. 

The paper is organized as follows. After giving some preliminaries in Section~\ref{sec:prel} we study in Section~\ref{sec:rad} a brace-theoretic radical akin to the Jacobson and Brown--McCoy radicals in ring theory, i.e., the intersection of maximal ideals of a skew left brace. Within group theory, that is, for trivial skew left braces, it corresponds to the Frattini subgroup, i.e., the intersection of all maximal subgroups. After proving some basic properties, we show in Theorem~\ref{thm:Gaschutz} an analog in the context of skew braces
of a well-known group-theoretic result of Gasch\"utz. In Theorem~\ref{thm:FP} we prove that under some
natural conditions about a skew left brace $A$, the quotient $A/\Rad(A)$ is a direct product of finitely many simple skew left braces. Since finiteness conditions are needed
for our purposes, we introduce Artinian and Noetherian skew left braces. In Section~\ref{sec:weight}, following Baer's paper~\cite{MR166267}
and motivated also by \cite{MOT2012}, we introduce the notion of weight, that is the minimal number of generators needed to generate a 
skew brace as an ideal, and study its basic properties. 
In Theorem~\ref{thm:Wiegold} we obtain a brace-theoretic analog of Wiegold's theorem: each Artinian perfect skew left brace has weight equal to one. In Theorem~\ref{thm:Kutzko} we prove 
an analog of a theorem of Kutzko~\cite{MR399272}, that 
for a skew left brace $A$ that satisfies some natural finiteness condition,
the weights of $A$ and $A/A^{(2)}$ coincide. As an application of all these results, we prove in Theorem~\ref{thm:square-free} that a finite skew left brace of square-free order has always weight one. In Section~\ref{sec:strbr} we provide some more applications of the notion of weight
of a skew left brace to study the structure group of a solution of the YBE. In particular, Theorem \ref{thm:Schur} is an analog of a celebrated group-theoretic result found by Schur. These results are then used in Section~\ref{sec:torsion} to study the torsion of the structure group of a solution of the YBE, see Theorem~\ref{thm:torsion}. Several examples show that the result of this theorem is sharp. 


\section{Preliminaries}\label{sec:prel}


In this section we give some of the needed background on skew left braces and their relation with the YBE (see for example \cite{MR3647970,MR1769723}).
A solution of the YBE is a pair $(X,r)$, where $X$ is a non-empty set and $r\colon X\times X\to X\times X$ is a bijective map such that 
\[r_1r_2r_1=r_2r_1r_2,\] where $r_1=r\times{\id}$ and $r_2={\id}\times r$. By convention, we always write
\[r(x,y)=(\sigma_x(y),\tau_y(x)),\quad x,y\in X.\] The solution $(X,r)$ is said to be non-degenerate if each $\sigma_x$ and
each $\tau_x$ is a bijective map. The solution is said to be involutive provided $r^2=\id_{X\times X}$. 

Etingof, Schedler and Soloviev defined in~\cite{MR1722951} the structure group of $(X,r)$ as the group $G(X,r)$ with generators
the elements of $X$ and relations given by $xy=uv$ whenever $r(x,y)=(u,v)$, that is 
\[G(X,r)=\langle X\mid xy=\sigma_x(y)\tau_y(x)\text{ for }x,y\in X\rangle.\]
This group turns out to be an important tool for studying solutions of the YBE. The structure group
of a solution is even more than just a group: it is a skew left brace. 

A skew left brace is a triple $(A,+,\circ)$, where $(A,+)$ and $(A,\circ)$ are (not necessarily abelian) groups such that
\[a\circ (b+c)=a\circ b-a+a\circ c\] for all $a,b,c\in A$. If $A$ is a skew left brace then the map
\[\lambda\colon (A,\circ)\to\Aut(A,+),\quad a\mapsto\lambda_a,\quad \lambda_a(b)=-a+a\circ b,\]
is a group homomorphism. The following formulas are easily verified:
\[a+b=a\circ\lambda^{-1}_a(b),\quad a\circ b=a+\lambda_a(b),\quad -a=\lambda_a(a'),\]
 where $a'$ denotes the inverse of $a$ in $(A,\circ)$. Moreover, the operation \[a*b=\lambda_a(b)-b=-a+a\circ b-b\] satisfies the following:
\begin{align}
    \begin{aligned}
        x*(y+z) & = x*y +y +x*z -y,\\
        (x\circ y)*z & = x*(y*z) + y*z + x*z
    \end{aligned}\tag{$*$}\label{*}
\end{align}
for all $x,y,z\in A$.
If the additive group  $(A,+)$ is commutative then $A$ is called a left brace.
Note that if $(G,+)$ is a group then $(G,+,+)$ is a skew left brace, called a trivial skew left brace.
A trivial skew left brace which is a left brace is simply called a trivial left brace.

Skew left braces yield solutions of the YBE. Indeed, if $A$ is a skew left brace, then the map $r_A\colon A\times A\to A\times A$,
defined as \[r_A(a,b)=(\lambda_a(b),\lambda_a(b)'\circ a\circ b),\] is a non-degenerate solution of the YBE.
This fact was first observed by Rump for involutive solutions (see \cite{MR2278047}).

As we mentioned in the introduction, radical rings are examples of skew left braces. Let us mention another very important example.
Namely, if $(X,r)$ is a non-degenerate solution of the YBE then there exists a unique skew left brace structure over the structure group $G(X,r)$
such that \[r_{G(X,r)}(\iota\times\iota)=(\iota\times\iota)r,\] where $\iota\colon X\to G(X,r)$ is the canonical map.
The multiplicative group of this skew left brace is isomorphic to the structure
group $G(X,r)$ and the additive group is isomorphic to the group
\[A(X,r)=\langle X\mid x+\sigma_x(y)=\sigma_x(y)+\sigma_{\sigma_x(y)}(\tau_y(x))\text{ for }x,y\in X\rangle,\]
already considered in the works of Soloviev~\cite{MR1809284} and Lu, Yan and Zhu~\cite{MR1769723}.
It is worth to mention that in general the map $\iota\colon X\to G(X,r)$ does not have to be an embedding; solutions for which this map is injective
are called injective solutions, as introduced in \cite{MR1809284}. However, if $\tilde{X}=\iota(X)$ and $\tilde{r}\colon\tilde{X}\times\tilde{X}\to\tilde{X}\times\tilde{X}$ is the
restriction of $r_{G(X,r)}$ then the pair $(\tilde{X},\tilde{r})$ is an injective solution of the YBE, called the injectivization of $(X,r)$,
such that $G(\tilde{X},\tilde{r})\cong G(X,r)$.

Let $A$ be a skew left brace. A left ideal of $A$ is an additive subgroup $I$ such that $\lambda_a(I)\subseteq I$ for all $a\in A$,
which is equivalent to $A*I\subseteq I$. It follows that $I$ is then also a subgroup of the multiplicative group of $A$. An ideal
is a left ideal that is normal both in the additive and the multiplicative group of $A$; the last condition is equivalent to demanding that
$I*A\subseteq I$. The socle of $A$ is defined as \[\Soc(A)=\Ker\lambda\cap\Cen(A,+)\] and the annihilator
of $A$ is defined \cite{MR3917122} as \[\Ann(A)=\Soc(A)\cap\Cen(A,\circ).\] Both the socle and the annihilator of $A$ are ideals of $A$.
Also the additive subgroup $A^{(2)}$ generated by all elements of the form $a*b$ for $a,b\in A$ is an ideal of $A$.
We say that $A$ is trivial if both operations on $A$, that is addition and multiplication, coincide or, in other words, if $A^{(2)}=0$.
It is clear that if $I$ is an ideal of $A$ then $A/I$ is trivial if and only if $A^{(2)}\subseteq I$.

\section{The radical of a skew brace}\label{sec:rad}

If $S$ is a subset of a skew left brace $A$ then $(S)$ denotes the ideal of $A$ generated by the set $S$,
that is the smallest ideal of $A$ containing $S$. We say that $S$ is a generating set for $(S)$.
Moreover, $A$ is said to be finitely generated provided there exists a finite subset $S$ of $A$ such that $A=(S)$.
Recall also that an ideal $M$ of $A$ is said to be maximal if $M$ is the only proper ideal of $A$ containing $M$.

\begin{defn}
    The radical $\Rad(A)$ of a skew left brace $A$ is defined as the intersection of all maximal
    ideals of $A$, if such exist, and $A$ otherwise.
\end{defn}

Clearly $\Rad(A)$ is an ideal of $A$. Furthermore, if $I$ is an ideal of $A$ contained in $\Rad(A)$ then $\Rad(A/I)=\Rad(A)/I$.
In particular, $\Rad(A/\Rad(A))=0$. Moreover, if $A$ has a non-zero homomorphic image which is finitely
generated (in particular, if $A$ is finitely generated) then it will be shown in Proposition~\ref{pro:weight} that $\Rad(A)\ne A$.
Note also that if $A$ is a trivial skew left brace then $\Rad(A)$ is the Frattini subgroup of $(A,+)$. Finally, in case $A$ is a left brace
corresponding to a radical ring, the radical $\Rad(A)$ is almost never equal to $A$, and thus, despite the similarity of its definition to the
Jacobson and Brown--McCoy radicals in ring theory, it behaves different. Therefore, $\Rad(A)$ should be treated as a new invariant of $A$.

\begin{defn}
    Let $A$ be a skew left brace. We say that $a\in A$ is a non-generating element provided $(S\cup\{a\})=A$
    implies $(S)=A$ for each subset $S\subseteq A$.
\end{defn}

\begin{defn}
    We say that an ideal $I$ of a skew left brace $A$ is small (or superfluous) if $I+J=A$ implies $J=A$ for each ideal $J$ of $A$.
\end{defn}

Using the above definitions we are able to give the following description of the radical of a skew left brace.

\begin{pro}\label{pro:desc}
    The radical $\Rad(A)$ of a skew left brace $A$ is equal to the set of all non-generating elements of $A$
    and also to the sum of all small ideals of $A$.
\end{pro}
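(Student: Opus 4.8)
The plan is to prove the two equalities separately, each via a double inclusion, mimicking the classical ring- and group-theoretic arguments for the Jacobson/Frattini radical.

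\textbf{Step 1: $\Rad(A)$ equals the set of non-generating elements.} First I would show every element of $\Rad(A)$ is non-generating. Suppose $a\in\Rad(A)$ and $(S\cup\{a\})=A$ but $(S)\ne A$. By a Zorn's lemma argument (the union of a chain of proper ideals each omitting a suitable ``witness'' is again proper), there is an ideal $M$ maximal among those containing $(S)$ but not equal to $A$; one checks $M$ is in fact a maximal ideal of $A$. Since $a\in\Rad(A)\subseteq M$ and $S\subseteq M$, we get $(S\cup\{a\})\subseteq M\subsetneq A$, a contradiction. Conversely, if $a\notin\Rad(A)$ then there is a maximal ideal $M$ with $a\notin M$; then $(M\cup\{a\})=A$ by maximality, while $(M)=M\ne A$, so $a$ is a generating element. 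The one subtlety here is the Zorn's lemma step: I must be careful that the ideal produced is genuinely \emph{maximal} and not merely maximal-with-respect-to-not-containing-$a$; the point is that any ideal properly containing it must contain $a$ hence (together with $S$) be all of $A$.

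\textbf{Step 2: $\Rad(A)$ equals the sum of all small ideals.} I would first show each small ideal $I$ is contained in $\Rad(A)$: if not, $I\not\subseteq M$ for some maximal ideal $M$, so $I+M=A$ (the sum of ideals is an ideal, and it properly contains the maximal $M$), whence smallness forces $M=A$, absurd. Since $\Rad(A)$ is an ideal, it contains the sum of all small ideals. For the reverse inclusion I would show $\Rad(A)$ itself is a small ideal — then it is one of the summands and the sum is exactly $\Rad(A)$. So suppose $\Rad(A)+J=A$ for an ideal $J\ne A$. Then $J$ is contained in some maximal ideal $M$ (again Zorn, now in the easy direction — although if $A$ has no maximal ideals at all one argues directly that $\Rad(A)=A$ is small since the hypothesis $J\ne A$ cannot hold with $\Rad(A)+J=A$... actually one must handle this edge case: if $A$ has no maximal ideals, $\Rad(A)=A$ and the only $J$ with $A+J=A$ need not be $A$, so I should instead note that in that case every proper ideal is small, or rather re-examine; the safe route is to treat the ``no maximal ideals'' case separately, where the claim $\Rad(A)=\sum(\text{small ideals})$ needs its own short argument). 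In the main case, $\Rad(A)\subseteq M$ and $J\subseteq M$ give $A=\Rad(A)+J\subseteq M$, contradiction, so $J=A$ and $\Rad(A)$ is small.

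\textbf{Main obstacle.} The principal technical point, used twice, is the Zorn's lemma construction of a maximal ideal containing a given proper ideal and avoiding a prescribed element, together with the verification that sums of ideals are ideals and that an ideal properly containing a maximal ideal is everything — these are the brace-theoretic analogues of standard facts, and the only real care needed is with the degenerate case in which $A$ has no maximal ideals, where $\Rad(A)=A$ by definition and the statements must be checked directly (here the set of non-generating elements is all of $A$ vacuously, and $A$ is trivially the sum of small ideals since $A$ itself, having no maximal ideal to make $A+J=A$ fail, is small). I expect the write-up to be short once these lemmas are isolated.
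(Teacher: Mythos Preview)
Your Step 1 is fine and matches the paper's argument (with the inclusions handled in the opposite order). The key point---using Zorn on ideals containing $(S)$ but \emph{omitting the element $a$}, so that chains have upper bounds---is present in your parenthetical about a ``witness'', even if the surrounding phrasing (``maximal among those containing $(S)$ but not equal to $A$'') is a little loose.

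Step 2 has a genuine error. The claim that $\Rad(A)$ is itself a small ideal is false in general, and not only in the ``no maximal ideals'' edge case you flag. Take the trivial left brace on the abelian group $A=\Z\times\Z_{p^\infty}$. Since $\Z_{p^\infty}$ is divisible, every homomorphism $A\to C_q$ kills the second factor, so the maximal ideals are exactly $q\Z\times\Z_{p^\infty}$ for primes $q$, and $\Rad(A)=0\times\Z_{p^\infty}$. But with $J=\Z\times 0$ we have $\Rad(A)+J=A$ while $J\ne A$, so $\Rad(A)$ is not small. (The same $J$ is a proper ideal contained in no maximal ideal, so the Zorn step ``$J$ lies in some maximal $M$'' also fails here, even though $A$ has plenty of maximal ideals.) Your treatment of the edge case is likewise off: when $A=\Z_{p^\infty}$ there are no maximal ideals, yet $A$ is not small, since $A+J=A$ holds for every proper $J$.

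The paper avoids this by not trying to prove $\Rad(A)$ small. Instead it uses Step~1: for each $c\in\Rad(A)$ the element $c$ is non-generating, so if $(c)+J=A$ then $(J\cup\{c\})=A$ forces $(J)=J=A$; hence the \emph{principal} ideal $(c)$ is small. Then $\Rad(A)=\sum_{c\in\Rad(A)}(c)$ lies in the sum of small ideals. Replacing your ``$\Rad(A)$ is small'' paragraph with this two-line argument fixes the proof.
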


\begin{proof}
    If $a\in A$ is a non-generating element of $A$ and $M$ is a maximal ideal of $A$ then $a\in M$, because otherwise $(M\cup\{a\})=A$
    would lead to $M=A$, a contradiction. Hence $a\in\Rad(A)$. Also conversely, let $b\in\Rad(A)$ and $(S\cup\{b\})=A$ for some $S\subseteq A$.
    We claim that $b\in(S)$, which clearly implies $(S)=A$. To prove the claim suppose, on the contrary, that
    $b\notin(S)$ and consider the family \[\mathcal{F}=\{I\lhd A:(S)\subseteq I\text{ but }b\notin I\}.\]
    Applying Zorn's Lemma to $\mathcal{F}$ we see that there exists a maximal ideal $M$ of $A$ such that $(S)\subseteq M$ but $b\notin M$,
    a contradiction with $b\in\Rad(A)$. This completes the proof of the first part of the Proposition.
        
    If $I$ is a small ideal of $A$ and $M$ is a maximal ideal of $A$ then necessarily $I\subseteq M$, because otherwise $I+M=A$ would lead to
    $M=A$, a contradiction. Hence the sum of all small ideals of $A$ is contained in $\Rad(A)$. On the other hand, if $c\in\Rad(A)$ then a similar
    argument as in the first paragraph of the proof shows that the ideal $(c)$ is small, and the result follows.
\end{proof}

The following result is similar to that of Gasch\"utz~\cite{MR57873}, which says that for a finite group $G$ and its maximal
subgroup $M$ we have $[G,G]\subseteq M$ or $\Cen(G)\subseteq M$. In particular, the intersection $[G,G]\cap\Cen(G)$ is contained in the Frattini
subgroup $\Phi(G)$ of $G$. We denote the commutator subgroup of the group
$(A,+)$ as $[A,A]_+$. Note that if $(A,+,\circ)$ is a skew left brace, then
$[A,A]_+$ is a strong left ideal of $A$ (see \cite{JKVAV} for the
definition of strong left ideals in skew braces).

\begin{thm}\label{thm:Gaschutz}
	Let $A$ be a skew left brace. If $M$ is a maximal ideal of $A$ then $[A,A]_+ +A^{(2)}\subseteq M$ or $\Soc(A)\subseteq M$. In particular,
	$A^{(2)}\cap\Soc(A)\subseteq\Rad(A)$.
\end{thm}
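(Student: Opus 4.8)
The plan is to prove the stated dichotomy in its contrapositive form: if a maximal ideal $M$ of $A$ does not contain $\Soc(A)$, then $M$ must contain both $[A,A]_+$ and $A^{(2)}$. So suppose $\Soc(A)\not\subseteq M$. Since $\Soc(A)$ is an ideal of $A$ and sums of ideals are ideals, $\Soc(A)+M$ is an ideal of $A$ properly containing $M$; by maximality of $M$ we get $\Soc(A)+M=A$.

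The crucial structural observation is that $\Soc(A)$ is a trivial left brace. Indeed, $\Soc(A)\subseteq\Cen(A,+)$ makes its additive group abelian, while $\Soc(A)\subseteq\Ker\lambda$ gives $\lambda_s=\id$ for every $s\in\Soc(A)$, so that $s\circ t=s+\lambda_s(t)=s+t$ for all $s,t\in\Soc(A)$. Next I would consider the composite $\Soc(A)\hookrightarrow A\to A/M$ of the inclusion with the canonical projection. This is a homomorphism of skew braces with kernel $\Soc(A)\cap M$, and it is surjective because $\Soc(A)+M=A$ means every coset modulo $M$ already has a representative inside $\Soc(A)$. Hence $A/M\cong\Soc(A)/(\Soc(A)\cap M)$, a homomorphic image of a trivial left brace, so $A/M$ is itself a trivial left brace. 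Therefore $(A/M,+)$ is abelian, forcing $[A,A]_+\subseteq M$, and $(A/M)^{(2)}=0$, forcing $A^{(2)}\subseteq M$. Together these give $[A,A]_++A^{(2)}\subseteq M$, which is the desired dichotomy.

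For the final assertion I would simply run through all maximal ideals $M$ of $A$: by the dichotomy, $A^{(2)}\cap\Soc(A)$ is contained in $A^{(2)}\subseteq M$ in the first case, and in $\Soc(A)\subseteq M$ in the second, so $A^{(2)}\cap\Soc(A)\subseteq M$ in either case; intersecting over all maximal ideals then gives $A^{(2)}\cap\Soc(A)\subseteq\Rad(A)$, the case where $A$ has no maximal ideals being trivial since then $\Rad(A)=A$.

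The main point to be careful about is the little second-isomorphism-theorem argument in the middle — namely checking that $\Soc(A)+M$ is genuinely an ideal, and that $\Soc(A)\to A/M$ really is a surjective skew brace homomorphism with kernel exactly $\Soc(A)\cap M$ — but this is routine from the preliminaries. The substantive input is just the pair of elementary facts that $\Soc(A)$ is a trivial left brace and that the class of trivial left braces is closed under homomorphic images; once these are in hand the proof is essentially forced.
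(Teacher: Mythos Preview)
Your proof is correct and follows essentially the same approach as the paper's. The paper phrases the dichotomy via the quotient $B=A/M$: it notes that $(\Soc(A)+M)/M\subseteq\Soc(B)$ and then uses simplicity of $B$ to get $\Soc(B)=0$ (whence $\Soc(A)\subseteq M$) or $\Soc(B)=B$ (whence $B$ is a trivial left brace, forcing $[A,A]_+$ and $A^{(2)}$ into $M$); your contrapositive argument with the second isomorphism theorem is just a repackaging of the second case, reaching the same conclusion that $A/M$ is a trivial left brace.
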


\begin{proof}
    Let $B=A/M$. Clearly $(\Soc(A)+M)/M\subseteq\Soc(B)$. As $B$ is a simple skew left brace, either $\Soc(B)=0$ or $\Soc(B)=B$.
    In the former case we get $\Soc(A)\subseteq M$. Whereas, if $\Soc(B)=B$ then $(A^{(2)}+M)/M=B^{(2)}=0$ yields $A^{(2)}\subseteq M$.
    Furthermore, as $\Soc(B)=B$, it follows that $\Cen(B,+) = B$. Hence, $[A,A]_+ \subseteq M$.
    Since $A^{(2)} \cap \Soc(A) \subseteq M$ for each maximal ideal $M$ of $A$, it follows that $A^{(2)}\cap\Soc(A)\subseteq\Rad(A)$.
\end{proof}

A skew left brace $A$ is said to be prime if for all non-zero ideals $I$ and $J$ one has $I*J\ne 0$.
An ideal $P$ of a skew left brace $A$ is said to be prime if $A/P$ is a prime skew left brace. 

\begin{pro}\label{pro:NP}
    Let $M$ be a maximal ideal of a skew left brace $A$. Then $M$ is a prime ideal if and only if $A^{(2)}\nsubseteq M$.
\end{pro}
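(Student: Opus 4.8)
The plan is to pass to the quotient $B=A/M$. Since $M$ is maximal, $B$ is a simple skew left brace, so its only ideals are $0$ and $B$; moreover $B\neq 0$ because $M$, being maximal, is a proper ideal. By definition $M$ is a prime ideal if and only if $B$ is a prime skew left brace, i.e. if and only if $I*J\neq 0$ for all non-zero ideals $I,J$ of $B$. By simplicity the only non-zero ideal of $B$ is $B$ itself, so this condition collapses to the single requirement $B*B\neq 0$, equivalently $B^{(2)}\neq 0$.

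It then remains to translate $B^{(2)}\neq 0$ into a statement about $M$. The canonical projection $\pi\colon A\to A/M$ is a homomorphism of skew left braces, hence it preserves the operation $*$ and carries the ideal $A^{(2)}$ onto $(A/M)^{(2)}=B^{(2)}$. Therefore $B^{(2)}=0$ precisely when $A^{(2)}\subseteq M$, which, as recalled in Section~\ref{sec:prel}, is exactly the condition that $A/M$ be a trivial skew left brace. Chaining the equivalences gives: $M$ is prime $\iff$ $B$ is prime $\iff$ $B^{(2)}\neq 0$ $\iff$ $A^{(2)}\nsubseteq M$.

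Concretely, for the forward implication I would argue by contraposition: if $A^{(2)}\subseteq M$ then $B=A/M$ is trivial and non-zero, so $x*y=0$ for all $x,y\in B$ (immediate from $x*y=-x+x\circ y-y$ with $\circ=+$); taking the non-zero ideal $I=J=B$ yields $B*B=0$, so $B$ is not prime and neither is $M$. For the converse, if $A^{(2)}\nsubseteq M$ then $B^{(2)}\neq 0$, so $a*b\neq 0$ for some $a,b\in B$; since the only non-zero ideal of the simple brace $B$ is $B$ itself, every pair of non-zero ideals $I,J$ satisfies $I*J=B*B\neq 0$, so $B$, and hence $M$, is prime.

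I do not anticipate a genuine obstacle: the content is essentially the dictionary ``maximal ideal $\leftrightarrow$ simple quotient'' together with the facts (both already available) that $A^{(2)}$ is an ideal mapped onto $(A/M)^{(2)}$ by the quotient map and that $A/M$ is trivial iff $A^{(2)}\subseteq M$. The only point deserving a line of care is the reduction, using simplicity of $A/M$, of the defining condition for a prime skew left brace to the single inequality $B*B\neq 0$.
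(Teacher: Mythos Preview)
Your argument is correct. Passing to the simple quotient $B=A/M$ and observing that primeness of a simple skew left brace collapses to the single condition $B*B\neq 0$, which via the canonical projection is equivalent to $A^{(2)}\nsubseteq M$, is a clean and complete proof.

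The paper proceeds somewhat differently for the implication ``$M$ not prime $\Rightarrow A^{(2)}\subseteq M$''. Rather than working in the quotient, it stays inside $A$: given ideals $I,J$ with $I*J\subseteq M$ but $I,J\nsubseteq M$, maximality forces $I+M=J+M=A$, and then a direct computation using the identities~\eqref{*} (together with $I+M=I\circ M$) shows
\[A^{(2)}=(I+M)*(J+M)\subseteq(I\circ M)*J+M\subseteq I*(M*J)+M*J+I*J+M\subseteq M.\]
Your route bypasses this calculation entirely by exploiting simplicity of $A/M$ to reduce the universal quantifier over pairs of non-zero ideals to the single pair $(B,B)$. The paper's computation, on the other hand, makes the role of the distributivity-type relations~\eqref{*} explicit and would adapt to situations where one only knows $I+M=J+M=A$ without $M$ being maximal. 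Both arguments are short; yours is the more conceptual of the two.
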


 \begin{proof}
    If $A^{(2)}\subseteq M$ then $M$ cannot be prime, because otherwise we would get $A\subseteq M$, a contradiction.
    On the other hand, if $M$ is not a prime ideal then there exist ideals $I,J$ of $A$ such that $I*J \subseteq M$
    but $I,J\nsubseteq M$. Then clearly $I + M = J + M = A$ and thus, by \eqref{*}, we get
    \begin{align*}
        A^{(2)} & =(I+M)*(J+M)\subseteq(I\circ M)*J+M\\
        & \subseteq I*(M*J)+M*J+I*J+M\subseteq M.\qedhere
    \end{align*}
\end{proof}

Recall that a skew left brace $A$ is perfect if $A=A^{(2)}=A*A$.

\begin{cor}
    Let $A$ be a finitely generated skew left brace. Then $A$ is perfect if and only if all maximal ideals of $A$ are prime.
\end{cor}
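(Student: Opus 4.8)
The plan is to prove both implications using Proposition~\ref{pro:NP}, which characterizes primeness of a maximal ideal $M$ by the condition $A^{(2)}\nsubseteq M$. First suppose that $A$ is perfect, so $A=A^{(2)}$. Then trivially $A^{(2)}=A\nsubseteq M$ for every maximal (hence proper) ideal $M$, so by Proposition~\ref{pro:NP} every maximal ideal is prime; note this direction does not even need finite generation. For the converse, assume that every maximal ideal of $A$ is prime. By Proposition~\ref{pro:NP} this means $A^{(2)}\nsubseteq M$ for each maximal ideal $M$, i.e.\ $A^{(2)}$ is contained in no maximal ideal. I would then argue that $A^{(2)}=A$: if $A^{(2)}$ were a proper ideal of $A$, then since $A$ is finitely generated (and hence so is $A/A^{(2)}$, which is a nonzero finitely generated skew left brace), Proposition~\ref{pro:weight} — already cited in the excerpt as guaranteeing $\Rad(B)\ne B$ for nonzero finitely generated $B$ — shows $A/A^{(2)}$ has a maximal ideal. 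Pulling this maximal ideal back to $A$ gives a maximal ideal $M$ of $A$ with $A^{(2)}\subseteq M$, contradicting $A^{(2)}\nsubseteq M$. Hence $A^{(2)}=A$.

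It remains to upgrade $A^{(2)}=A$ to the full perfectness condition $A=A^{(2)}=A*A$. Here I would use the second identity in \eqref{*}, namely $(x\circ y)*z = x*(y*z)+y*z+x*z$, which shows that the additive subgroup $A*A$ already absorbs products $(x\circ y)*z$; combined with the fact that $A^{(2)}$ is by definition the additive subgroup generated by the elements $a*b$, one sees that $A*A$ and $A^{(2)}$ are intimately related. In fact, since $A^{(2)}$ is an ideal and $A/A^{(2)}$ is trivial, every element of $A$ is an $A^{(2)}$-shift of itself, and a short computation with \eqref{*} shows the additive span of $\{a*b : a,b\in A\}$ is closed under the additive and multiplicative group operations needed, giving $A*A=A^{(2)}$. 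Thus $A=A^{(2)}=A*A$ and $A$ is perfect.

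The main obstacle I anticipate is the last step: identifying $A*A$ (the additive subgroup generated by all $a*b$) with $A^{(2)}$, or more precisely making sure that the perfectness condition as stated (which lists all three of $A$, $A^{(2)}$, $A*A$) is equivalent to the single equality $A=A^{(2)}$. This is essentially a definitional matter — in the literature $A^{(2)}$ and $A*A$ denote the same additive subgroup — so one should double-check that in this paper's conventions $A^{(2)}=A*A$ as sets (the excerpt defines $A^{(2)}$ as "the additive subgroup generated by all elements of the form $a*b$", which is exactly $A*A$ in the notation of the perfectness definition). Once that is settled, the corollary follows immediately from Propositions~\ref{pro:NP} and~\ref{pro:weight}, and no delicate computation is needed.
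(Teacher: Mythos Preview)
Your proof is correct and matches the paper's intended argument; the paper states this as an immediate corollary of Proposition~\ref{pro:NP} without an explicit proof, and the reasoning you give via Propositions~\ref{pro:NP} and~\ref{pro:weight} is exactly what is implicit. Your concern about distinguishing $A^{(2)}$ from $A*A$ is unnecessary: in this paper $A^{(2)}$ is \emph{defined} as the additive subgroup generated by all $a*b$, so $A^{(2)}=A*A$ by notation and the perfectness condition $A=A^{(2)}=A*A$ is a single equality, not two.
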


It is worth to mention that maximal non-prime ideals of skew braces do exist. For example, if $A$ is a finite two sided brace, with more than one element,
i.e., a non-trivial finite nilpotent ring, then each maximal ideal of $A$ cannot be prime.
In \cite[Example 5.3]{CJO18} an example is given of a finite prime non-simple left brace $A$  with a unique non-trivial ideal, say $M$. Moreover, as a left brace, $M$ is simple. Hence $\Rad(A)=M$ is perfect; again confirming that the behaviour of the brace-theoretic radical can be very different from that  in ring theory. 

\begin{defn}
    If $A$ is a skew left brace then $\Rad'(A)$ denotes the intersection of all maximal ideals of $A$ which are prime,
    if such exist, and $A$ otherwise. Clearly $\Rad(A)\subseteq\Rad'(A)$.
\end{defn}

\begin{pro}\label{pro:inc}
	Assume that $I$ is an ideal of a skew left brace $A$. Then $\Rad(I)\subseteq\Rad(A)$ and $\Rad'(I)\subseteq\Rad'(A)$.
\end{pro}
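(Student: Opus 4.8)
The plan is to show the two inclusions by the same mechanism: transport maximal ideals of the big brace down to ideals of the small one, using the correspondence between ideals of $I$ and (certain) ideals of $A$. The key structural input is that if $I$ is an ideal of $A$ and $M$ is an ideal of $A$ with $M \subseteq I$, then $M$ is also an ideal of $I$, and conversely every ideal of $I$ that happens to be normal in the relevant sense of $A$ arises this way; more importantly, any maximal ideal $N$ of $I$ extends to an ideal $(N)_A$ of $A$, and one can control how far $N$ sits from being maximal \emph{in} $A$. So first I would fix an element $a \in \Rad(I)$ (equivalently, by Proposition~\ref{pro:desc}, a non-generating element of $I$, or one lying in every small ideal of $I$) and aim to prove $a \in M$ for every maximal ideal $M$ of $A$.

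The cleanest route uses the small-ideal description. Let $M$ be a maximal ideal of $A$. Consider $M \cap I$, which is an ideal of $I$ (intersection of $I$ with an ideal of $A$, hence a left ideal of $I$ that is normal in both operations of $I$). I would argue that $M \cap I$ is small in $I$, \emph{or} that $I \subseteq M$ outright; in the latter case $\Rad(I) \subseteq I \subseteq M$ is immediate. In the former case, suppose $(M \cap I) + J = I$ for some ideal $J$ of $I$. The obstacle is that $J$ need not be an ideal of $A$, so I cannot directly contradict smallness in $A$. To get around this, I would instead run the argument on the quotient: pass to $A/M$, which is simple. The image of $I$ in $A/M$ is an ideal of $A/M$, hence is $0$ or all of $A/M$. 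If it is $0$ then $I \subseteq M$ and we are done as above. If it is all of $A/M$, then $I + M = A$, and since $M$ is maximal this forces... here I need the right leverage: the image of $\Rad(I)$ under the quotient map $A \to A/M$ should be $0$. Equivalently, letting $\pi\colon A \to A/M$, I claim $\pi(\Rad(I)) \subseteq \Rad(\pi(I))$; but $\pi(I) = A/M$ is simple, so $\Rad(\pi(I)) = 0$, giving $\Rad(I) \subseteq M$, which is exactly what we want.

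So the heart of the proof reduces to the functoriality-type statement: for a skew left brace homomorphism $f\colon A \to A/M$ and an ideal $I$ of $A$, we have $f(\Rad(I)) \subseteq \Rad(f(I))$, where $f(I)$ is regarded as an ideal of $A/M$ (so its radical is taken as a brace in its own right). This in turn follows from the non-generating-element characterization: if $a$ is a non-generating element of $I$ and $S'$ is a subset of $f(I)$ with $(S' \cup \{f(a)\})_{f(I)} = f(I)$, lift $S'$ to a subset $S$ of $I$; one shows $(S \cup \{a\})_I + \Ker(f|_I) = I$, and since an analysis of the kernel (which is $M \cap I$, an ideal of $I$) plus the hypothesis that $a$ is non-generating forces $(S)_I = I$ after absorbing the kernel — this last absorption is the delicate point, and is where I expect to lean on maximality of $M$ to ensure $M \cap I$ is either everything or behaves like a small/"radical-visible" ideal. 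For the primality version $\Rad'(I) \subseteq \Rad'(A)$, I would repeat the entire argument verbatim, only restricting attention to those maximal ideals $M$ of $A$ that are prime, and using Proposition~\ref{pro:NP} (i.e.\ $A^{(2)} \nsubseteq M$) together with the fact that if $M$ is a prime maximal ideal of $A$ and $I \nsubseteq M$ then $(M \cap I)$ is a prime ideal of $I$, so that the relevant quotient $I/(M\cap I) \hookrightarrow A/M$ is prime and the image of $\Rad'(I)$ lands in $\Rad'$ of a simple (hence prime) brace, namely $0$.

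The main obstacle, as flagged, is that ideals of $I$ do not automatically extend to ideals of $A$, so the naive "pull back a counterexample" strategy fails; the fix is to work with the simple quotient $A/M$ and prove the functoriality statement $f(\Rad(I)) \subseteq \Rad(f(I))$ along a surjection, where the kernel is tame because it equals $M \cap I$. Once that lemma is in place, both inclusions of Proposition~\ref{pro:inc} drop out immediately by intersecting over all (prime) maximal $M$.
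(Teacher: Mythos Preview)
Your core strategy is correct and coincides with the paper's: fix a maximal ideal $M$ of $A$, split into the cases $I\subseteq M$ (trivial) and $I\nsubseteq M$, and in the second case use the simple quotient $A/M$. But you have obscured the one-line punchline and replaced it with a roundabout ``functoriality'' lemma that you yourself flag as delicate. The paper simply observes that if $I\nsubseteq M$ then $I+M=A$, hence the second isomorphism theorem gives
\[
I/(I\cap M)\cong (I+M)/M=A/M,
\]
which is simple; therefore $I\cap M$ is a \emph{maximal} ideal of $I$, and so $\Rad(I)\subseteq I\cap M\subseteq M$. That is the whole argument. Your detours through the small-ideal and non-generating-element descriptions, and the attempt to prove $f(\Rad(I))\subseteq\Rad(f(I))$ via lifting generating sets, are unnecessary: once you know $I\cap M$ is maximal in $I$, the inclusion $\Rad(I)\subseteq I\cap M$ is immediate from the definition of $\Rad$.

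For $\Rad'$ your idea is again right and matches the paper. With $M$ prime and $I\nsubseteq M$, the same isomorphism $I/(I\cap M)\cong A/M$ shows that $I\cap M$ is a \emph{prime} maximal ideal of $I$ (since $A/M$ is prime), so $\Rad'(I)\subseteq I\cap M\subseteq M$. No separate appeal to Proposition~\ref{pro:NP} is needed beyond this. In short: there is no genuine gap, but you should cut the exploratory scaffolding and state directly that $I\cap M$ is a maximal (resp.\ prime maximal) ideal of $I$ via the second isomorphism theorem.
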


\begin{proof}
    If $\Rad(A)=A$ then clearly $\Rad(I)\subseteq\Rad(A)$. So assume that $\Rad(A)\ne A$ and choose a maximal ideal $M$ of $A$.
    If $I\subseteq M$ then $\Rad(I)\subseteq M$. Whereas, if $I$ is not contained in $M$ then $I+M=A$ and thus
    \[I/(I\cap M)\cong(I+M)/M=A/M\] is a simple skew left brace. Therefore, $I\cap M$ is a maximal ideal of $I$.
    Hence we get $\Rad(I)\subseteq I\cap M\subseteq M$. Since a similar proof works for $\Rad'$, the result follows.
\end{proof}

We say that a skew left brace $A$ is Artinian if every descending chain of ideals of $A$ is eventually stationary.
The following result may be viewed as a brace-theoretic analog of the celebrated Artin--Wedderburn decomposition theorem for
semisimple (that is, Artinian and semiprimitive) rings.

\begin{thm}\label{thm:FP}
    If $A$ is an Artinian skew left brace (in particular, if $A$ is finite) then the skew left brace $A/\Rad(A)$
    is isomorphic to a direct product of finitely many simple skew left braces. In particular, if furthermore $A$ 
    is prime, then $A$ is simple if and only if $\Rad (A)=0$.
\end{thm}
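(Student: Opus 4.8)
The plan is to reduce to the case of zero radical and then prove a Chinese Remainder Theorem. Put $B=A/\Rad(A)$. As already observed, $\Rad(B)=0$, and $B$ is Artinian since descending chains of ideals of $B$ correspond to descending chains of ideals of $A$ containing $\Rad(A)$; hence it suffices to prove that an Artinian skew left brace $B$ with $\Rad(B)=0$ is a direct product of finitely many simple skew left braces. If $B=0$ this is the empty product (this case genuinely occurs, e.g.\ for the trivial skew left brace on a Pr\"ufer group $\Z(p^{\infty})$, which is Artinian but has no maximal ideal), so assume $B\ne 0$; since $\Rad(B)=0\ne B$, the very definition of $\Rad$ forces $B$ to possess at least one maximal ideal.

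Next I would use the descending chain condition to replace the radical by a finite intersection. The family of all finite intersections $M_1\cap\dots\cap M_k$ of maximal ideals of $B$ is non-empty, so it has a minimal member $C$; for any maximal ideal $M$ the ideal $M\cap C$ again belongs to this family and is contained in $C$, whence $M\cap C=C$ and $C\subseteq M$. Thus $C\subseteq\Rad(B)=0$, i.e.\ $M_1\cap\dots\cap M_n=0$ for suitable maximal ideals $M_1,\dots,M_n$; choose $n$ as small as possible. Put $N_i=\bigcap_{j\ne i}M_j$. Minimality of $n$ gives $N_i\ne 0$ and even $N_i\nsubseteq M_i$ (otherwise $N_i=M_i\cap N_i=0$, contradicting minimality). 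Since the sum of two ideals of a skew left brace is again an ideal, $N_i+M_i$ is an ideal strictly larger than $M_i$, so $N_i+M_i=B$ by maximality of $M_i$.

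Then the standard Chinese Remainder argument completes the first part. The canonical homomorphism $\varphi\colon B\to\prod_{i=1}^{n}B/M_i$ has kernel $\bigcap_{i=1}^{n}M_i=0$, hence is injective. For surjectivity, fix $i$ and a target $b_i\in B$; using $N_i+M_i=B$ write $b_i=m+y_i$ with $m\in M_i$ and $y_i\in N_i$, so that $y_i=-m+b_i$ lies in the additive coset $b_i+M_i$, while $y_i\in N_i\subseteq M_j$ for every $j\ne i$. Reducing $y_1+\dots+y_n$ modulo $M_i$ annihilates all terms but the $i$-th and yields $b_i$, so $\varphi$ is onto. As each $B/M_i$ is simple, $B\cong\prod_{i=1}^{n}B/M_i$ is a direct product of finitely many simple skew left braces. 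The point that needs care here is precisely this last argument: because $(B,+)$ need not be abelian, one must use that the $M_j$ are normal additive subgroups to see that ``reduce and add'' genuinely produces a preimage; the auxiliary fact that sums and intersections of ideals are ideals, which underlies the whole bookkeeping above, is routine but should be checked.

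Finally, for the last statement assume $A$ is prime and Artinian, and we may assume $A\ne 0$. If $A$ is simple then $0$ is a maximal ideal of $A$, so $\Rad(A)=0$. Conversely, if $\Rad(A)=0$ then by the first part $A\cong S_1\times\dots\times S_n$ with each $S_i$ a non-zero simple skew left brace and $n\ge 1$; if $n\ge 2$ then $I=S_1\times 0\times\dots\times 0$ and $J=0\times S_2\times\dots\times S_n$ are non-zero ideals with $I*J=0$, because $*$ is computed componentwise in a direct product and $s*0=0*s=0$ for all $s$. This contradicts primeness, so $n=1$ and $A$ is simple.
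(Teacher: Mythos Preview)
Your proof is correct and follows essentially the same route as the paper: use the Artinian hypothesis to obtain a minimal finite intersection of maximal ideals equal to the radical, then establish a Chinese Remainder isomorphism onto the product of the simple quotients. The only cosmetic differences are that you first pass to $B=A/\Rad(A)$ and then prove the CRT surjectivity directly for $n$ factors (using the elements $y_i\in N_i$), whereas the paper works in $A$ and argues by induction on $n$ via the two-ideal case; you also supply the short argument for the ``In particular'' clause about prime Artinian braces, which the paper states without proof.
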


\begin{proof}
    Consider the family \[\mathcal{F}=\{M_1\cap\dotsb\cap M_n:\text{each }M_i\text{ is a maximal ideal of }A\text{ and }n\geq 1\}.\]
    Since $A$ is Artinian, there exists a minimal member $R=M_1\cap\dotsb\cap M_n$ of $\mathcal{F}$. First, note that $\Rad(A)=R$.
    Indeed, clearly $\Rad(A)\subseteq R$ and if $M$ is a maximal ideal of $A$ then $R\cap M$
    is a member of $\mathcal{F}$. Hence $R=R\cap M\subseteq M$ by the minimality of $R$ in $\mathcal{F}$.
    Because $R\subseteq M$ for each maximal ideal $M$ of $A$, we get $R\subseteq\Rad(A)$.
    
    Next, we claim that if none of the ideals $M_1,\dotsc,M_n$ is redundant in the presentation of the radical $R=M_1\cap\dotsb\cap M_n$
    as an intersection of maximal ideals then $A/R\cong A/M_1\times\dotsb\times A/M_n$. 
    If $n=1$ then the claim is trivial. If $n=2$ then the natural injective morphism $A/R\to A/M_1\times A/M_2$ is also surjective.
    Indeed, if $a_1,a_2\in A$ then using $A=M_1+M_2$ we may write $a_i=x_i+y_i$, where $x_i\in M_1$ and $y_i\in M_2$ for $1\leq i\leq 2$.
    Define $x=y_1+x_2$. Then
    \begin{align*}
        x-a_1 & =y_1+x_2-y_1-x_1\in M_1,\\
        x-a_2 & =y_1+x_2-y_2-x_2\in M_2.
    \end{align*}
    Hence the claim also follows in this case.
    (Note that the above proof shows in fact that $A/(I\cap J)\cong A/I\times A/J$ for any ideals $I$ and $J$ of $A$ satisfying $I+J=A$.)
    Finally, if $n>2$ then $J=M_2\cap\dotsb\cap M_n\nsubseteq M_1$ (otherwise $M_1$ would be redundant in the presentation of $R$ as
    an intersection of maximal ideals) and thus we get $M_1+J=A$. Hence (by the comment above) $A/R\cong A/M_1\times A/J$. Since $A/J$
    is an Artinian skew left brace with trivial radical that can be presented as an intersection of $n-1$ maximal ideals $M_2/J,\dotsc,M_n/J$,
    it follows by induction on $n$ that \[A/J\cong (A/J)/(M_2/J)\times\dotsb\times (A/J)/(M_n/J)\cong A/M_2\times\dotsb\times A/M_n,\]
    which completes the proof.
\end{proof}

Solvable braces were introduced in~\cite{BCJO2}, and studied later in \cite{KSV2018}.
For a skew left brace $A$ define $A_1=A$ and $A_{i+1}=A_i*A_i$ for $i\geq 1$.
Then $A$ is said to be solvable provided $A_n=0$ for some $n\geq 1$.  

\begin{pro}\label{pro:A2}
	Assume that $A$ is a solvable skew left brace. If $M$ is a maximal ideal of $A$ then $A^{(2)}\subseteq M$, so $M$ is not prime.
	In particular, $A^{(2)}\subseteq\Rad(A)$.
\end{pro}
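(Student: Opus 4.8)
The plan is to pass to the simple quotient $B=A/M$ and exploit the fact that solvability is inherited by homomorphic images. The first step is to record that every skew left brace epimorphism $\pi\colon A\to B$ satisfies $\pi(A_i)=B_i$ for all $i\geq 1$. This is a straightforward induction: the base case is $\pi(A)=B$, and the inductive step follows from the identity $\pi(a*b)=\pi(a)*\pi(b)$ (valid since $\pi$ is both additive and multiplicative) together with the surjectivity of $\pi$, which together give $\pi(A_{i+1})=\pi(A_i*A_i)=\pi(A_i)*\pi(A_i)=B_i*B_i=B_{i+1}$. In particular, if $A$ is solvable then so is $B$.

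Applying this to the canonical projection $\pi\colon A\to B:=A/M$, we obtain that $B$ is solvable, say $B_n=0$ for some $n\geq 1$. The second step uses that $M$ is maximal, so that $B$ is a \emph{simple} skew left brace; since $B^{(2)}$ is an ideal of $B$, either $B^{(2)}=0$ or $B^{(2)}=B$. If $B^{(2)}=B$, then $B_2=B$ and hence, by induction, $B_i=B$ for all $i$, contradicting $B_n=0$ unless $B=0$; but $B=0$ would force $M=A$, which is impossible since a maximal ideal is proper. Therefore $B^{(2)}=0$, which is precisely the statement $A^{(2)}\subseteq M$. Proposition~\ref{pro:NP} then immediately gives that $M$ is not prime.

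For the final assertion, observe that the preceding applies to \emph{every} maximal ideal $M$ of $A$, so $A^{(2)}$ lies in the intersection of all maximal ideals of $A$; and if $A$ happens to have no maximal ideals, then $\Rad(A)=A\supseteq A^{(2)}$ trivially. Either way $A^{(2)}\subseteq\Rad(A)$. I do not expect a real obstacle here, as the argument is essentially formal: the only point requiring a little care is the identity $\pi(A_i)=B_i$, i.e.\ that the operation $*$ is preserved by brace homomorphisms, which is immediate from $a*b=-a+a\circ b-b$. One should also keep in mind the convention that a maximal ideal is proper, so that $A/M\neq 0$; with that in place the proof goes through without incident.
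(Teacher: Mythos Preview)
Your proof is correct and follows essentially the same approach as the paper's: pass to the simple quotient $B=A/M$, observe that it inherits solvability, and conclude $B^{(2)}=0$ by simplicity. The paper's proof is simply a more compressed version of yours, omitting the explicit verification that $\pi(A_i)=B_i$ and the case analysis for $B^{(2)}$.
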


\begin{proof}
    Since $A$ is a solvable skew left brace, it follows that $A/M$ is a solvable simple skew left brace.
    Hence $(A/M)^{(2)} = 0$ and thus we get $A^{(2)}\subseteq M$. Moreover, the remaining assertion is a
    consequence of Proposition~\ref{pro:NP}.
\end{proof}

The following examples show that the converse of Proposition~\ref{pro:A2} is not true.

\begin{exa}
	Let $A=S(24,708)$, a non-solvable skew left brace with additive group isomorphic to $S_4$, 
	see~\cite[\S2]{KSV2018}. 
	A straightforward computer
	calculation shows that $A$ has a unique, hence maximal, ideal $0\ne I\ne A$ and $0\ne A^{(2)}\ne A$.
	Therefore $A^{(2)}=I=\Rad(A)$.
\end{exa}

\begin{exa}\label{exa:odd}
    Let $A$ be a simple left brace of odd order, which admits an automorphism of order $2$. Examples of simple left braces
    of odd order were constructed in \cite{CJO18}. Moreover, since some of these left braces are the so-called asymmetric products
    of trivial braces, it is easy to check that they do admit automorphisms of order $2$. Then the left brace $B= A \rtimes C_2$
    is a non-solvable brace, for which $B^{(2)} $ is contained in every maximal ideal.
\end{exa}


\section{The weight of a skew brace}\label{sec:weight}

In this Section, following Baer's paper~\cite{MR166267} and motivated also by \cite{MOT2012}, we introduce the notion of weight
of a skew left brace, that is the minimal number of generators needed to generate a skew left brace as an ideal, and study its basic properties.

\begin{defn}
    Let $A$ be a non-zero skew left brace. The weight $\omega(A)$ of $A$ is defined as the minimal number of elements
    of $A$ needed to generate $A$ (as an ideal). By convention, we put $\omega(A)=1$ if $A=0$. 
\end{defn}

\begin{exa}
    Let $p$ be a prime number and $n\in\N$. The trivial brace over the elementary abelian group $C_p^n$ has weight $n$. 
\end{exa}

Using standard arguments, including those based on Zorn's lemma, it is easy to show the following facts.

\begin{pro}\label{pro:weight}
    Let $A$ be a skew left brace.
    \begin{enumerate}
        \item If $I$ is a proper ideal of $A$ such that $A/I$ has finite weight then there exists a maximal ideal $M$ of $A$ containing $I$.
        \item If $A$ is of finite weight and $S$ is a subset of $A$ then $S$ generates $A$ if and
    only if its image in $A/\Rad(A)$ generates $A/\Rad(A)$. In particular, \[\omega(A)=\omega(A/\Rad(A)).\]
    \end{enumerate}
\end{pro}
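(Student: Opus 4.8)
The plan is to prove (1) first, then use it to get (2). For (1), suppose $I$ is a proper ideal of $A$ such that $A/I$ has finite weight. Passing to the quotient $A/I$, which is a non-zero skew left brace of finite weight, it suffices to show that a non-zero skew left brace $B$ of finite weight possesses at least one maximal ideal. Write $B=(b_1,\dots,b_k)$ with $k=\omega(B)$ minimal, and consider the family $\mathcal{F}$ of proper ideals of $B$ containing the ideal generated by $b_1,\dots,b_{k-1}$ (or the zero ideal if $k=1$); this family is non-empty because $B$ itself is not in it and $(b_1,\dots,b_{k-1})$ is by minimality of $k$. A chain in $\mathcal{F}$ has its union again a proper ideal in $\mathcal{F}$ — it is an ideal since ideals are closed under directed unions, and it is proper because if it equaled $B$ then some member of the chain would already contain all of $b_1,\dots,b_k$, hence equal $B$. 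By Zorn's Lemma $\mathcal{F}$ has a maximal element $M$, and any ideal strictly containing $M$ must contain $b_k$ (else it lies in $\mathcal{F}$ and contradicts maximality of $M$), hence equals $B$ since it contains $b_1,\dots,b_k$; so $M$ is a maximal ideal of $A/I$, and its preimage in $A$ is a maximal ideal containing $I$.

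For (2), the inclusion of $\Rad(A)$ into every maximal ideal gives one direction immediately: if $S$ generates $A$ then its image generates $A/\Rad(A)$. Conversely, suppose the image of $S$ generates $A/\Rad(A)$, i.e.\ $(S)+\Rad(A)=A$. Since $A$ has finite weight, $(S)$ is a proper-or-full ideal; if $(S)\ne A$, then by part (1) applied to $I=(S)$ — noting $A/(S)$ is a homomorphic image of $A$, hence of finite weight — there is a maximal ideal $M\supseteq (S)$. But then $\Rad(A)\subseteq M$ as well, so $A=(S)+\Rad(A)\subseteq M$, contradicting that $M$ is proper. Hence $(S)=A$. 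Taking $S$ of minimal size generating $A/\Rad(A)$ then gives $\omega(A)\le\omega(A/\Rad(A))$, and the reverse inequality is clear since a generating set of $A$ maps onto a generating set of $A/\Rad(A)$; thus $\omega(A)=\omega(A/\Rad(A))$.

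The one point requiring a little care — the main obstacle, such as it is — is the verification that in part (1) a union of a chain of proper ideals from $\mathcal{F}$ is again a proper ideal: one must check it remains an ideal (closed under $+$, both kinds of conjugation, and $\ast$ on both sides), which follows from the directed-union structure since each of these operations only involves finitely many elements at a time, and that it stays proper, which is where the finite-generation hypothesis is essential, exactly as in the classical group-theoretic argument that a finitely generated group has a maximal subgroup. Everything else is a routine transfer of standard Frattini-type arguments to the skew-brace setting.
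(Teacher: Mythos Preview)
Your proof is correct and supplies exactly the ``standard arguments, including those based on Zorn's lemma'' that the paper invokes without detail. The only remark is that in part~(1) you can simplify slightly by applying Zorn's Lemma directly to the poset of \emph{all} proper ideals of $B$ (properness of the union of a chain following since the finitely many generators $b_1,\dots,b_k$ would otherwise all lie in a single member of the chain), rather than restricting to ideals containing $(b_1,\dots,b_{k-1})$; but your version works just as well.
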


Our next result may be treated as a brace-theoretic version of Wiegold's problem that
appeared in the Kourovka Notebook \cite[Problem 5.52]{Kourovka}, which asks whether
a finitely generated perfect group is the normal closure of a single element.

\begin{thm}\label{thm:Wiegold}
    Assume that $A$ is an Artinian skew left brace of finite weight. If $A$ is perfect then $\omega(A)=1$.
\end{thm}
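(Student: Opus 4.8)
If $A=0$ the statement holds by convention, so assume $A\neq 0$. Since $A$ has finite weight, Proposition~\ref{pro:weight}(1) applied to the proper ideal $0$ produces a maximal ideal of $A$, so $\Rad(A)\neq A$ and $\bar A:=A/\Rad(A)\neq 0$; moreover Proposition~\ref{pro:weight}(2) gives $\omega(A)=\omega(\bar A)$. As $A$ is perfect, so is every homomorphic image of $A$; in particular $\bar A$ is perfect. By Theorem~\ref{thm:FP} (here I use that $A$ is Artinian) we may write $\bar A\cong S_1\times\dots\times S_n$ with each $S_i$ a simple skew left brace and, discarding zero factors, each $S_i\neq 0$ (so $n\geq 1$). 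Each $S_i$ is a homomorphic image of $\bar A$ under a coordinate projection, hence perfect; in particular $S_i^{(2)}=S_i\neq 0$, so no $S_i$ is a trivial skew brace. It now suffices to prove $\omega(S_1\times\dots\times S_n)=1$.

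\textbf{Key claim.} \emph{If $S_1,\dots,S_n$ are perfect simple skew left braces, then every ideal $I$ of $P:=S_1\times\dots\times S_n$ with $\pi_i(I)=S_i$ for all $i$ (where $\pi_i$ denotes the $i$-th projection) equals $P$.} The plan is induction on $n$, the case $n=1$ being immediate from simplicity. For the inductive step, note that $I\cap(0\times\dots\times 0\times S_n)$ is an ideal of $S_n$, hence $0$ or $S_n$. If it is $S_n$, then $I$ contains the last factor $K:=0\times\dots\times 0\times S_n$, and $I/K$ is an ideal of $P/K\cong S_1\times\dots\times S_{n-1}$ surjecting onto each $S_i$, hence is everything by induction, whence $I=P$. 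If it is $0$, then the projection $\rho\colon P\to S_1\times\dots\times S_{n-1}$ restricts to an \emph{injective} morphism on $I$, whose image $\rho(I)$ is an ideal of $S_1\times\dots\times S_{n-1}$ surjecting onto each factor, hence all of it by induction; so $\rho|_I$ is an isomorphism and $I=\{(t,\psi(t)):t\in S_1\times\dots\times S_{n-1}\}$ for a surjective morphism $\psi\colon S_1\times\dots\times S_{n-1}\to S_n$. Feeding the elements $(0,\dots,0,c)$ with $c\in S_n$ into the left-ideal condition $\lambda_{(0,\dots,0,c)}(I)\subseteq I$ forces $\lambda_c(\psi(t))=\psi(t)$ for all $t$ and all $c\in S_n$; since $\psi$ is onto, $\lambda_c=\id_{S_n}$ for every $c\in S_n$, i.e. addition and circle operation coincide on $S_n$, making $S_n$ a trivial skew brace — contradicting $S_n^{(2)}=S_n\neq 0$. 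Hence only the first case occurs, and the claim follows.

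\textbf{Conclusion.} Choose $x=(s_1,\dots,s_n)\in P$ with every $s_i\neq 0$ (possible as each $S_i\neq 0$). For each $i$, $\pi_i\bigl((x)\bigr)$ is an ideal of $S_i$ containing $s_i\neq 0$, hence equals $S_i$; so $(x)$ satisfies the hypothesis of the Key claim, giving $(x)=P$. Therefore $\omega(P)=1$, and $\omega(A)=\omega(\bar A)=\omega(P)=1$.

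\textbf{Expected main obstacle.} Everything outside the Key claim is bookkeeping on results already established (Theorem~\ref{thm:FP} and Proposition~\ref{pro:weight}). Inside the Key claim, the decisive point is ruling out ``twisted diagonal'' ideals of the product; this is exactly where perfectness of the simple factors is used, and one must be careful to invoke the $\lambda$-part of the ideal axioms — not merely normality in the additive and multiplicative groups — in order to conclude that such an ideal would collapse one of the factors to a trivial skew brace.
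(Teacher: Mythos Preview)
Your proof is correct, but it takes a genuinely different route from the paper's. After the same reduction to $\bar A\cong S_1\times\dots\times S_n$ with each $S_i$ simple and perfect, the paper proceeds by a direct computation rather than your structural Key claim: since $S_i*S_i=S_i\neq 0$, one can pick $x_i\in S_i$ with $S_i*x_i\neq 0$, set $x=(x_1,\dots,x_n)$, and observe that for suitable $a_i\in S_i$ the element
\[
(0,\dots,0,a_i,0,\dots,0)*x=(0,\dots,0,a_i*x_i,0,\dots,0)
\]
lies in $(x)$ and is a nonzero element of the $i$-th factor; simplicity then forces each factor into $(x)$, so $(x)=\bar A$. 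This avoids induction and any Goursat-type analysis of ideals of the product.

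Your approach instead proves the stronger intermediate statement that in a product of perfect simples, any ideal surjecting onto every factor is the whole product, and then applies it to $(x)$ for an arbitrary $x$ with all coordinates nonzero. The argument is sound --- the crucial step where you feed $(0,\dots,0,c)$ into the $\lambda$-stability of the graph of $\psi$ to force $\lambda_c=\id_{S_n}$ is exactly the right use of perfectness --- but it is longer than necessary for the theorem at hand. What it buys is a reusable lemma (essentially a skew-brace version of the fact that subdirect products of perfect simples are direct), whereas the paper's computation is quicker but tailored to this single conclusion. Either way, the essential content is the same: perfectness of the simple factors is what rules out diagonal-type sub-ideals.
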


\begin{proof}
    Clearly we may assume that $A\ne 0$. Then, by Proposition \ref{pro:weight}, we may also assume that $\Rad(A)=0$.
    Furthermore, Theorem \ref{thm:FP} assures that we may restrict
    to the case $A=A_1\times\dotsb\times A_n$, where $A_1,\dotsc,A_n\ne 0$ are Artinian, simple and perfect skew left braces.
    Because $A_i*A_i=A_i\ne 0$, there exists $x_i\in A_i$ such that $A_i*x_i\ne 0$. Let $x=(x_1,\dotsc,x_n)\in A$ and let $I=(x)$ be the
    ideal of $A$ generated by $x$. Since $a_i*x_i\ne 0$ for some $a_i\in A_i$, we get
    \[e_i=(0,\dotsc,0,a_i,0,\dotsc,0)*x=(0,\dotsc,0,a_i*x_i,0,\dotsc,0)\in I.\]
    But $(a_i*x_i)=A_i$ guarantees that the ideal of $A$ generated by $e_i$ is equal to
    \[\{0\}\times\dotsb\times\{0\}\times A_i\times\{0\}\times\dotsb\times\{0\}.\]
    Hence $A=\sum_{i=1}^n(e_i)\subseteq I$ and thus $A=I=(x)$.
\end{proof}


The following is similar to the result of Kutzko \cite{MR399272}, which asserts that the group-theoretic weight
of a group $G$ satisfying a certain mild finiteness condition is equal to the weight of its abelianization $G_{\ab}=G/[G,G]$.

\begin{thm}\label{thm:Kutzko}
    Let $A$ be a skew left brace of finite weight. If the lattice of ideals of $A$ contained
    in $A^{(2)}$ satisfies the descending chain condition then \[\omega(A)=\omega(A/A^{(2)}).\]
\end{thm}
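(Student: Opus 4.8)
The plan is to reduce to the case $\Rad(A)=0$ and then transfer the weight across the quotient by $A^{(2)}$ using Theorem~\ref{thm:FP}. First I would note that $\omega(A/A^{(2)})\le\omega(A)$ is immediate, since the image of any ideal-generating set of $A$ generates $A/A^{(2)}$ as an ideal. So the content is the reverse inequality. By Proposition~\ref{pro:weight}(2) we have $\omega(A)=\omega(A/\Rad(A))$, and since $A^{(2)}\cap\Soc(A)\subseteq\Rad(A)$ (Theorem~\ref{thm:Gaschutz}) one should check that passing to $A/\Rad(A)$ does not change $A^{(2)}$ in a way that breaks the argument; in fact $(A/\Rad(A))^{(2)}=(A^{(2)}+\Rad(A))/\Rad(A)$, so $\omega(A/A^{(2)})$ and $\omega\big((A/\Rad(A))/(A/\Rad(A))^{(2)}\big)$ should also be compared. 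The cleanest route is to show directly that if $\bar S$ generates $A/A^{(2)}$ as an ideal then a suitable lift $S$ (together with no extra elements) already generates $A$, using the descending chain condition on ideals inside $A^{(2)}$.

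Here is the main step. Let $\bar S=\{\bar s_1,\dots,\bar s_k\}$ generate $A/A^{(2)}$ as an ideal, with $k=\omega(A/A^{(2)})$, and lift to $S=\{s_1,\dots,s_k\}\subseteq A$. Put $I=(S)$; then $I+A^{(2)}=A$. The goal is $I=A$. Consider the ideals $A^{(2)}\supseteq (A^{(2)})^{(2)}+\cdots$, or more precisely the chain obtained by iterating $J\mapsto I+ J^{(2)}$ or by looking at $A^{(2)}\supseteq A^{(2)}*A\supseteq\cdots$ inside $A^{(2)}$; the DCC hypothesis on ideals of $A$ contained in $A^{(2)}$ is exactly what lets such a chain stabilise. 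The key identity is that since $I+A^{(2)}=A$, applying the $*$-operation and using \eqref{*} gives $A^{(2)}=A*A=(I+A^{(2)})*(I+A^{(2)})\subseteq I+A^{(2)}*A^{(2)}$ (absorbing the $I*$ and $*I$ terms into $I$ since $I$ is an ideal), hence $A^{(2)}=I+(A^{(2)})^{(2)}$, and then $A=I+(A^{(2)})^{(2)}$. Iterating, $A=I+(A^{(2)})^{(n)}$ for every $n$, where $(-)^{(n)}$ denotes the $n$-th term of the derived-type series. Now if $A$ itself were solvable this series would reach $0$ and we would be done, but $A$ need not be solvable, so instead I would invoke DCC: the descending chain $A^{(2)}\supseteq (A^{(2)})^{(2)}\supseteq\cdots$ of ideals of $A$ (each contained in $A^{(2)}$) stabilises, say at an ideal $K$ with $K=K^{(2)}$, i.e. $K$ is perfect, and $A=I+K$.

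It remains to kill $K$. Since $K$ is a perfect ideal contained in $A^{(2)}$ and $A$ has finite weight, $K$ has finite weight; if in addition $K$ were Artinian, Theorem~\ref{thm:Wiegold} would give $\omega(K)=1$, but we do not have full Artinianness — only DCC on ideals of $A$ inside $A^{(2)}$, which is precisely DCC on ideals of $A$ inside $K$ as well, and by Proposition~\ref{pro:inc} this translates to a usable finiteness statement about $K$. The cleanest finish is: $A=I+K$ with $K$ perfect, so $A/I$ is a homomorphic image of $K$ hence perfect, and $A/I$ inherits the DCC on its ideals; if $A/I\ne 0$ then by Theorem~\ref{thm:FP} $A/I$ surjects onto a nonzero simple perfect skew brace $T$, and then $A$ would be generated as an ideal by $S$ together with one preimage of a generator of $T$ — wait, that would give $k+1$, so one must argue more carefully that $T$ is actually generated by the image of $S$. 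The real point is that $T$, being a simple quotient of $A$, corresponds to a maximal ideal $M\supseteq I$ of $A$; but $I+A^{(2)}=A$ forces $A^{(2)}\not\subseteq M$, so by Proposition~\ref{pro:NP} $M$ is prime, and since the image of $\bar S$ generates $A/A^{(2)}$... I expect the honest argument to run: the maximal ideals of $A$ containing $A^{(2)}$ are exactly the preimages of maximal ideals of $A/A^{(2)}$, and $\bar S$ generating $A/A^{(2)}$ means $S$ is contained in no such ideal; combined with $I+K=A$ and an induction on the length of the stabilised chain (or on the number of simple factors of $A/\Rad(A)$ supplied by Theorem~\ref{thm:FP}), one concludes $I=A$, hence $\omega(A)\le k=\omega(A/A^{(2)})$.

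The main obstacle is precisely this last step: controlling the perfect ideal $K=(A^{(2)})^{(\infty)}$ and showing $S$ already generates $A/I$-worth of it without spending an extra generator. The temptation is to cite Theorem~\ref{thm:Wiegold}, but that needs $K$ Artinian, whereas we are only given DCC on ideals of $A$ contained in $A^{(2)}$; so the argument must either (a) check that this DCC suffices to run the proof of Theorem~\ref{thm:Wiegold} for $K$, via Proposition~\ref{pro:inc} and Theorem~\ref{thm:FP} applied to $K$'s relevant quotient, or (b) avoid $K$ altogether and instead do a direct Nakayama-style induction: $A=I+A^{(2)}$, and $A^{(2)}$ modulo the next term is a ``trivial'' piece whose generators as an ideal of $A$ are already subsumed because $\bar S$ generates $A/A^{(2)}$ — this requires knowing that $A^{(2)}/(A^{(2)})^{(2)}$, as an $A$-ideal, is generated by nothing new, which is the delicate point and where the DCC is genuinely used to terminate the induction.
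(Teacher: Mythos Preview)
Your iteration argument is sound up to the point where you obtain $A=I+K$ with $K=(A^{(2)})^{(\infty)}$ perfect and $A/I$ Artinian; the computation $A^{(2)}=(I+A^{(2)})*(I+A^{(2)})\subseteq I+(A^{(2)})^{(2)}$ does go through using~\eqref{*}. But the proof then has a real gap, and you have identified it yourself: you cannot conclude $I=A$. Applying Theorem~\ref{thm:Wiegold} to the Artinian perfect quotient $A/I$ yields only $\omega(A/I)=1$, hence $\omega(A)\le k+1$, which is one too many. Your attempted rescue via maximal ideals does not work either: the fact that $S$ lies in no maximal ideal containing $A^{(2)}$ says nothing about maximal ideals $M$ with $A^{(2)}\nsubseteq M$, and there is no reason why $(S)$ should avoid those. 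The induction you gesture at has no well-founded inductive parameter.

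What is missing is the generator-absorption trick that is the heart of Kutzko's original argument, and this is exactly what the paper does. The paper uses the DCC differently: rather than iterating a derived series, it picks an ideal $I\subseteq A^{(2)}$ \emph{minimal} subject to $\omega(A/I)=n$, lifts generators $x_1,\dotsc,x_n$ of $A/I$, and sets $K=(x_1,\dotsc,x_n)$, $J=K\cap I$. If $J\ne I$, one finds an ideal $L$ of $A$ with $J\subseteq L\subsetneq I$ such that $I/L$ is perfect and generated (as an ideal of $A/L$) by a single $\overline g$; crucially $A/L\cong I/L\times K/J$. Now comes the point you are missing: one \emph{replaces} $x_1$ by $g+x_1$. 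Since $g\in A^{(2)}$, the image in $A/A^{(2)}$ is unchanged. And because $I/L$ is a direct factor of $A/L$, one computes via~\eqref{*} that $\overline y*(\overline g+\overline x_1)=\overline y*\overline g$ and $(\overline g+\overline x_1)*\overline y=\overline g*\overline y$ for $\overline y\in I/L$, so the ideal of $A/L$ generated by $\overline g+\overline x_1$ already contains the perfect piece $I/L$. Hence $\{g+x_1,x_2,\dotsc,x_n\}$ generates $A/L$, giving $\omega(A/L)=n$ with $L\subsetneq I$, a contradiction. This absorption of the extra generator into $x_1$ is the idea your argument lacks, and without it the approach cannot close.
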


\begin{proof}
    Let $n=\omega(A/A^{(2)})$. Clearly it is enough to show that $\omega(A)\leq n$. To do this, fix an ideal $I$ of $A$
    contained in $A^{(2)}$ such that $\omega(A/I)=n$ and minimal with that property. Then choose elements $x_1,\dotsc,x_n\in A$
    such that the set $\{x_1+I,\dotsc,x_n+I\}$ generates $A/I$. Let $K$ denote the ideal of $A$ generated by
    the set $\{x_1,\dotsc,x_n\}$. Put $J = K \cap I$.
    
    We claim that $J=I$ and then $I\subseteq K$ together with $A=I+K$ imply $A = K$, which clearly leads to $\omega(A)\leq n$, as desired.
    To prove our claim assume, on the contrary, that $J \neq I$. Since $I/J \cong A/ K$ and $\omega(A) < \infty$, we obtain
    $\omega(I/J) < \infty$. Therefore, there exists an ideal $L$ in $I$ containing $J$ such that $\omega(I/L) = 1$.
    Because $A/J \cong I/J \times K/J$ and $J \subseteq L$, it follows that $L$ is an ideal of $A$.
    
    Since $I \subseteq A^{(2)}$, we get $I/L \subseteq (A/L)^{(2)}$. Moreover, as $A/L \cong I/L \times K/J$, we obtain
    $I/L \subseteq (I/L)^{(2)}$, which shows that $I/L$ is perfect.
    
    Because $\omega(I/L) = 1$, there exists $g \in I$ such that the ideal generated by $\overline{g}$ in $I/L$ is equal to $I/L$.
    Write $\overline{x}_1$ for the image of $x_1$ in $K/J$. If $\overline{y} \in I/L$ then $A/L \cong I/L \times K/J$ implies that
    $\overline{y}*(\overline{g}+\overline{x}_1)=\overline{y}*\overline{g}$. Thus the ideal $\overline{M}_1$
    generated by the set $\{\overline{y}*\overline{g}:\overline{y}\in I/L\}$ is contained in the ideal of $A/L$ generated by
    $\overline{g}+\overline{x}_1$. Furthermore, as $\overline{g}+ \overline{x}_1=\overline{g}\circ \overline{x}_1$ in $A/L$,
    it follows that \[(\overline{g}+\overline{x}_1)*\overline{y}=\overline{g}*(\overline{x}_1*\overline{y})
    +\overline{x}_1*\overline{y}+\overline{g}*\overline{y} = \overline{g}*\overline{y}.\]
    This shows that the ideal $\overline{M}_2$ generated by the set $\{\overline{g}*\overline{y}:\overline{y} \in I/L\}$
    is contained in the ideal generated by $\overline{g}+\overline{x}_1$. Hence $(I/L)/(\overline{M}_1+\overline{M}_2)$
    is a trivial skew left brace and cyclic as a group (with a generator the natural image of $\overline{g}$).
    Because $I/L$ is perfect, we get $I/L = \overline{M}_1+\overline{M}_2$. Therefore, $I/L$ is contained in the ideal $N$ of $A/L$
    generated by $\overline{g}+\overline{x}_1$. This in turn shows that $I = N + L $. Let $T = \{g+x_1,x_2,\dotsc,x_n\}$ and $M$ denotes
    the ideal generated by $T$. Then \[A = K+I = M +I = M + L,\] which yields $\omega(A/L) \leq \omega(A/I)$.
    Since $L$ is properly contained in $I$, we conclude that $\omega(A/L)=\omega(A/I)=n$, which is in contradiction with the minimality of $I$.
\end{proof}



\begin{thm}\label{thm:square-free}
    Let $A$ be an Artinian skew left brace of finite weight. Then \[\omega(A)=\omega((B/B^{(2)})_{\ab}),\] where $B=A/\Rad(A)$.
    In particular, if $(B/B^{(2)})_{\ab}$ is cyclic (for example if $A$ is of square-free order) then $\omega(A)=1$. 
\end{thm}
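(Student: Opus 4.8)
The plan is to chain together Proposition~\ref{pro:weight}, Theorem~\ref{thm:Kutzko} and Theorem~\ref{thm:FP}, and then settle a small group-theoretic point. Write $B=A/\Rad(A)$. Since $A$ has finite weight, Proposition~\ref{pro:weight}(2) gives $\omega(A)=\omega(B)$ (and $\Rad(B)=0$). The skew left brace $B$ is Artinian, being a homomorphic image of the Artinian skew left brace $A$; in particular the lattice of ideals of $B$ contained in $B^{(2)}$ satisfies the descending chain condition, and $\omega(B)=\omega(A)<\infty$. Hence Theorem~\ref{thm:Kutzko} applies to $B$ and yields $\omega(B)=\omega(B/B^{(2)})$. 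Setting $C=B/B^{(2)}$, a trivial skew left brace since $C^{(2)}=0$, it remains to prove that $\omega(C)=\omega(C_{\ab})$; as $C_{\ab}=(B/B^{(2)})_{\ab}$, this will complete the first assertion, because then $\omega(A)=\omega(B)=\omega(C)=\omega((B/B^{(2)})_{\ab})$.

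To understand $C$ I would apply Theorem~\ref{thm:FP} to $B$: since $B$ is Artinian with $\Rad(B)=0$, we may write $B\cong B_1\times\dotsb\times B_n$ with each $B_i$ a non-zero simple skew left brace. For a simple skew left brace the ideal $B_i^{(2)}$ equals either $0$ or $B_i$; thus each $B_i$ is either trivial or perfect, and $B^{(2)}=\prod_i B_i^{(2)}$ gives $C=B/B^{(2)}\cong\prod_{i\in T}B_i$, where $T=\{\,i:B_i\text{ is trivial}\,\}$. Each $B_i$ with $i\in T$ is a trivial simple skew left brace, so it is the trivial skew left brace on a simple group $G_i$ (the ideals of a trivial skew left brace are exactly the normal subgroups of its additive group). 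Hence $C$ is the trivial skew left brace on the group $(C,+)=\prod_{i\in T}G_i$, its ideals are the normal subgroups of $(C,+)$, and $\omega(C)$ is the minimal number of normal generators of $(C,+)$. Likewise $C_{\ab}=\prod_{i\in T}(G_i)_{\ab}=\prod_{i\in T'}C_{p_i}$, where $T'\subseteq T$ indexes those $G_i$ that are abelian (necessarily cyclic of prime order $p_i$), so $\omega(C_{\ab})$ is the minimal number of generators of this finite abelian group, i.e.\ $\max(d,1)$ with $d=\max_p\#\{\,i\in T:G_i\cong C_p\,\}$.

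The remaining point is thus purely group-theoretic: a direct product $C=\prod_{i\in T}G_i$ of finitely many simple groups is normally generated by exactly $\max(d,1)$ elements. The inequality $\omega(C)\geq\omega(C_{\ab})=\max(d,1)$ is immediate, since weight does not increase under the surjection $C\to C_{\ab}$. For the reverse inequality I would exhibit $\max(d,1)$ normal generators directly: for each prime $p$ list the $C_p$-coordinates of $C$ as $j_{p,1},\dots,j_{p,m_p}$ and fix generators; in the $t$-th candidate put a generator of $C_p$ in coordinate $j_{p,t}$ (and the identity in the other $C_p$-coordinates) for every $p$ with $m_p\geq t$, and additionally put a nontrivial element of each nonabelian simple factor in the first candidate. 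Using that the normal closure in $\prod_i G_i$ of an element $v$ is $\prod_{i\in\operatorname{supp}(v)}G_i$ (a nontrivial element normally generates a nonabelian simple group, and generates $C_p$), one checks that the join of the normal closures of these candidates is all of $C$. Alternatively, one may simply quote Kutzko's theorem~\cite{MR399272} for the Artinian group $(C,+)$. Either way $\omega(C)=\omega(C_{\ab})$, which finishes the first statement.

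For the ``in particular'' part: if $(B/B^{(2)})_{\ab}$ is cyclic then its minimal number of generators is $1$, so $\omega(A)=1$. And if $A$ has square-free order, then $A$ is finite, hence Artinian and of finite weight, so the first part applies; moreover $(B/B^{(2)})_{\ab}$ is a finite abelian group whose order divides $|B|$, hence divides $|A|$ and is square-free, so $(B/B^{(2)})_{\ab}$ is cyclic and $\omega(A)=1$. I expect the one genuinely non-formal step to be the normal-generator count for a finite product of simple groups in the third paragraph; everything else is bookkeeping built on the earlier theorems.
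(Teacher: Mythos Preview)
Your proof is correct and follows essentially the same route as the paper: reduce to $B=A/\Rad(A)$ via Proposition~\ref{pro:weight}, apply Theorem~\ref{thm:Kutzko} to pass to $B/B^{(2)}$, decompose via Theorem~\ref{thm:FP} into simple factors so that $B/B^{(2)}$ is a trivial skew brace on a finite product of simple groups, and then invoke Kutzko's group-theoretic theorem to reach the abelianization. One caution on your direct alternative to Kutzko: the assertion that the normal closure of $v$ in $\prod_i G_i$ equals $\prod_{i\in\operatorname{supp}(v)}G_i$ fails for abelian factors (e.g.\ the diagonal in $C_p\times C_p$), though your specific construction avoids this pitfall by placing copies of the same $C_p$ in distinct candidates---so the construction works, but the stated justification needs adjusting.
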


\begin{proof}
    Proposition~\ref{pro:weight} yields $\omega(A)=\omega(B)$. Furthermore, Theorem~\ref{thm:FP} implies
    $B\cong S_1\times\cdots\times S_n$, where $S_1,\dotsc,S_n$ are simple skew left braces. Therefore, Theorem~\ref{thm:Kutzko} gives
    $\omega(B)=\omega(B/B^{(2)})$. Note that \[B/B^{(2)}\cong S_{i_1}\times\cdots\times S_{i_k},\] where $\{i_1,\dotsc,i_k\}\subseteq\{1,\dotsc,n\}$
    and $S_{i_1},\dotsc,S_{i_k}$ are simple trivial skew left braces, i.e., these are simple groups. As $B/B^{(2)}$ is a trivial skew left brace, $\omega(B/B^{(2)})$ is the group-theoretic weight of $B/B^{(2)}$. Hence, by Kutzko's theorem~\cite{MR399272}, both weights are equal to the group-theoretic weight of the abelianization $(B/B^{(2)})_{\ab}$.
    
    Therefore, it remains to show that if $A$ is of square-free order then $(B/B^{(2)})_{\ab}$ is cyclic.
    But in this case \[(B/B^{(2)})_{\ab}\cong C_{p_1}\times\cdots\times C_{p_j}\cong C_{p_1\dotsm p_j}\]
    for some $j\leq k$ and some, pairwise distinct, prime numbers $p_1,\dots,p_j$.
\end{proof}

\begin{pro}\label{pro:omega(AxB)}
    Let $A$ be a perfect skew left brace such that $\omega(A)=1$ (this condition is satisfied if $A$ is Artinian of finite weight,
    in particular if $A$ is finite; see Theorem \ref{thm:Wiegold}). If $B$ is a trivial skew left brace of finite weight then
    $\omega(A\times B)=\omega(B)$.
\end{pro}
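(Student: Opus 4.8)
The plan is to prove the two inequalities $\omega(A\times B)\geq\omega(B)$ and $\omega(A\times B)\leq\omega(B)$ separately. For the first, I would observe that, since $B$ is a trivial skew left brace, $\beta*\beta'=0$ for all $\beta,\beta'\in B$, and since $A$ is perfect $A^{(2)}=A$. Hence $(A\times B)^{(2)}$, being the additive subgroup generated by the elements $(\alpha,\beta)*(\alpha',\beta')=(\alpha*\alpha',0)$, equals $A\times\{0\}$, so $(A\times B)/(A\times B)^{(2)}\cong B$. Because the weight cannot increase under passage to a homomorphic image (the image of a generating set generates the quotient), this already gives $\omega(A\times B)\geq\omega(B)$.

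For the reverse inequality, set $m=\omega(B)$, fix $b_1,\dots,b_m\in B$ generating $B$ as an ideal, and fix $a\in A$ with $(a)=A$ (possible since $\omega(A)=1$). I claim that $I:=(\{(a,b_1),(0,b_2),\dots,(0,b_m)\})$ equals $A\times B$, which yields $\omega(A\times B)\leq m$ and finishes the proof. I will use repeatedly the elementary fact that, since $\lambda_{(\alpha,\beta)}=(\lambda_\alpha,\lambda_\beta)$ and $B$ is trivial, a subgroup $H\times\{0\}$ is an ideal of $A\times B$ precisely when $H$ is an ideal of $A$, and dually $\{0\}\times H$ is an ideal of $A\times B$ precisely when $H$ is an ideal of $B$.

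The key step is to show $A\times\{0\}\subseteq I$. Since $B$ is trivial, $\beta*b_1=0=b_1*\beta$, so for every $(\alpha,\beta)\in A\times B$ both $(\alpha,\beta)*(a,b_1)=(\alpha*a,0)$ and $(a,b_1)*(\alpha,\beta)=(a*\alpha,0)$ lie in $I$; consequently $M\times\{0\}\subseteq I$, where $M$ is the ideal of $A$ generated by $\{x*a:x\in A\}\cup\{a*x:x\in A\}$. Now in $\bar A:=A/M$ the image $\bar a$ satisfies $(\bar a)=\bar A$ together with $\bar a*\bar A=0=\bar A*\bar a$, i.e.\ $\lambda_{\bar a}=\id$ and $\lambda_{\bar x}(\bar a)=\bar a$ for all $\bar x\in\bar A$. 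As in the proof of Theorem~\ref{thm:Kutzko}, this forces $\bar A$ to be a trivial skew left brace: using $-z=\lambda_z(z')$, $z+w=z\circ\lambda_z^{-1}(w)$ and the identities~\eqref{*}, one verifies that the normal closure $N$ of $\bar a$ in $(\bar A,+)$ is an ideal of $\bar A$ all of whose elements $n$ satisfy $n*\bar A=0$; since $(\bar a)=\bar A$ then gives $\bar A=N$, it follows that $\bar A^{(2)}=0$. But $\bar A$ is a homomorphic image of the perfect brace $A$, hence perfect, so $\bar A=\bar A^{(2)}=0$, that is $M=A$; thus $A\times\{0\}\subseteq I$.

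It remains to note that $(a,0)\in A\times\{0\}\subseteq I$ gives $(0,b_1)=(a,b_1)-(a,0)\in I$, while $(0,b_i)\in I$ for $i\geq2$ by construction, so each ideal $\{0\}\times(b_i)$ of $A\times B$ is contained in $I$; summing, $I\supseteq\sum_{i=1}^m\bigl(\{0\}\times(b_i)\bigr)=\{0\}\times B$ because $b_1,\dots,b_m$ generate $B$. Hence $I\supseteq(A\times\{0\})+(\{0\}\times B)=A\times B$, as wanted. I expect the main obstacle to be the identity $M=A$ --- essentially a reprise of the corresponding step in Kutzko's argument, combining perfectness of $A$ with $(a)=A$ --- while the remaining parts are routine bookkeeping about ideals of a direct product.
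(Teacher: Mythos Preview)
Your proof is correct; the lower bound and the final bookkeeping agree with the paper. The difference is in how you obtain $A\times\{0\}\subseteq I$.

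The paper's route is shorter. It observes that the projection $\pi_A(J)$ of the ideal $J$ generated by $(a,b_1)$ is an ideal of $A$ containing $a$, hence equals $A$; thus for every $x\in A$ there is some $c_x\in B$ with $(x,c_x)\in J$. Then for any $x,y\in A$ one has $(x,c_x)*(y,c_y)=(x*y,c_x*c_y)=(x*y,0)\in J$, and since $A$ is perfect the additive span of these elements is $A^{(2)}\times\{0\}=A\times\{0\}$. That is the whole argument.

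You instead star only against the fixed element $(a,b_1)$, which yields $M\times\{0\}\subseteq I$ with $M$ the ideal of $A$ generated by $\{x*a,\,a*x:x\in A\}$, and you must then prove $M=A$. Your argument for this is valid: in $\bar A=A/M$ one has $\bar a\in\Ker\lambda\cap\Fix(\bar A)$, and because $y+\bar a-y=y\circ\bar a\circ y'$ for such $\bar a$, the additive normal closure $N$ of $\bar a$ coincides with its multiplicative normal closure and lies in $\Ker\lambda$; checking $\lambda$-invariance then shows $N$ is an ideal with $N*\bar A=0$, whence $N=\bar A$ (as $(\bar a)=\bar A$), so $\bar A$ is trivial and, being perfect, zero. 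This is correct but noticeably heavier than the paper's ``project first, then star'' trick, and the appeal to Theorem~\ref{thm:Kutzko} is only loosely analogous—the Kutzko proof uses the ideals $\overline{M}_1,\overline{M}_2$ in a product situation rather than the normal-closure argument you give. Your choice of generators $(a,b_1),(0,b_2),\dots,(0,b_m)$ versus the paper's $(a,b_1),\dots,(a,b_n)$ is immaterial.
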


\begin{proof}
    Assume that $A=(a)$ and $B=(b_1,\dots,b_n)$, where $n=\omega(B)$. Fix $b\in B$. Since $B$ is a trivial skew left brace,
    for each $x\in A$ there exist $k\in\Z$ such that $(x,b^k)$ belongs to the ideal of $A\times B$ generated by $(a,b)\in A\times B$.
    Furthermore, \[(x,b^k)*(y,b^l)=(x*y,0)\] for $y\in A$ and $l\in\Z$. As $A$ is perfect, it follows
    that $A\times\{0\}$ is contained in the ideal of $A\times B$ generated by $(a,b)$. In particular, $A\times\{0\}$ is contained in the ideal $I$
    of $A\times B$ generated by $(a,b_1),\dotsc,(a,b_n)\in A\times B$. But then $(a,0)\in I$ leads to $(0,b_i)\in I$ for each
    $1\leq i\leq n$, which yields $A\times B=I$. Thus $\omega(A\times B)\leq n$. Since $(A\times B)/A\cong B$, we get the
    opposite inequality $n\leq\omega(A\times B)$. Hence the result follows.
\end{proof}

Recall that a skew left brace $B$ acts on a skew left brace $A$ if there is a group homomorphism $\theta\colon(B,\circ)\to\Aut(A,+,\circ)$.
In this case, one defines the semidirect product $A\rtimes B$ of skew left braces by declaring
\begin{align*}
    (a_1,b_1) + (a_2,b_2) & =(a_1+a_2,b_1+b_2),\\
    (a_1,b_1) \circ (a_2,b_2) & =(a_1\circ\theta(b_1)(a_2),b_1\circ b_2)
\end{align*}
for $a_1,a_2\in A$ and $b_1,b_2\in B$.
As an another application of Theorem~\ref{thm:Kutzko} we get the following strengthening
of Proposition~\ref{pro:omega(AxB)} for Artinian skew left braces.

\begin{cor}
    Let $A$ and $B$ be Artinian skew left braces. Assume that $B$ acts on $A$ and that $A$ is perfect. Then  $\omega(A\rtimes B)=\omega(B)$.
\end{cor}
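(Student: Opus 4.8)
The plan is to reduce the semidirect-product statement to the direct-product statement already proved in Proposition~\ref{pro:omega(AxB)}, using the structural results about $A^{(2)}$ and the radical together with Theorem~\ref{thm:Kutzko}. First I would observe that $A\times\{0\}$ is an ideal of $A\rtimes B$: it is clearly normal in the additive group, it is normal in the multiplicative group because $\theta(b)$ are skew brace automorphisms fixing $A$, and it is a left ideal because $\lambda$ of $A\rtimes B$ restricts correctly on the $A$-component. Hence $(A\rtimes B)/(A\times\{0\})\cong B$, which immediately gives $\omega(B)\le\omega(A\rtimes B)$, so only the inequality $\omega(A\rtimes B)\le\omega(B)$ needs work.

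For the reverse inequality, the key point is that $A$ perfect forces $A\times\{0\}\subseteq (A\rtimes B)^{(2)}$: indeed $(a_1,0)*(a_2,0)=(a_1*a_2,0)$ since the $\lambda$-action on the second coordinate is trivial when both elements lie in $A\times\{0\}$, so $(A\times\{0\})^{(2)}=A^{(2)}\times\{0\}=A\times\{0\}$, and $(A\rtimes B)^{(2)}\supseteq(A\times\{0\})^{(2)}$. Since $A$ and $B$ are Artinian, so is $A\rtimes B$ (an extension of Artinian skew braces by an Artinian one is Artinian, as any descending chain of ideals projects to an eventually stationary chain in $B$ and intersects $A$ in an eventually stationary chain); in particular $A\rtimes B$ has finite weight by Proposition~\ref{pro:weight}(1) applied after noting $A\rtimes B$ is finitely generated, and the lattice of ideals contained in $(A\rtimes B)^{(2)}$ satisfies the descending chain condition. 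Theorem~\ref{thm:Kutzko} then yields
\[
\omega(A\rtimes B)=\omega\bigl((A\rtimes B)/(A\rtimes B)^{(2)}\bigr).
\]

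Now I would identify the right-hand side. Since $A\times\{0\}\subseteq(A\rtimes B)^{(2)}$, the quotient $(A\rtimes B)/(A\rtimes B)^{(2)}$ is a quotient of $(A\rtimes B)/(A\times\{0\})\cong B$, in fact it is $B/\pi((A\rtimes B)^{(2)})$ where $\pi\colon A\rtimes B\to B$ is the projection; and $\pi((A\rtimes B)^{(2)})$ is exactly $B^{(2)}$ because $\pi$ is a surjective skew brace homomorphism (so it sends the ideal generated by all $x*y$ onto the ideal generated by all $\pi(x)*\pi(y)$). Hence $(A\rtimes B)/(A\rtimes B)^{(2)}\cong B/B^{(2)}$, and therefore
\[
\omega(A\rtimes B)=\omega(B/B^{(2)})=\omega(B),
\]
where the last equality is Theorem~\ref{thm:Kutzko} applied to $B$ (which is Artinian of finite weight, so its lattice of ideals inside $B^{(2)}$ satisfies the d.c.c.). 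Combined with $\omega(B)\le\omega(A\rtimes B)$ from the first paragraph, this gives $\omega(A\rtimes B)=\omega(B)$.

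The main obstacle I anticipate is the bookkeeping needed to verify rigorously that $A\times\{0\}$ is an ideal and that $\pi$ carries $(A\rtimes B)^{(2)}$ onto $B^{(2)}$ — these are the places where one must actually unwind the definitions of $+$, $\circ$, $\lambda$ and $*$ in $A\rtimes B$ and use that $\theta(b)\in\Aut(A,+,\circ)$ rather than a mere group automorphism. Everything else (Artinian-ness of the extension, finite weight, applying Theorems~\ref{thm:Kutzko} and \ref{thm:FP} via the earlier results) is routine given the machinery already developed; note one could alternatively bypass Kutzko entirely and argue directly as in Proposition~\ref{pro:omega(AxB)}, lifting generators $b_1,\dots,b_n$ of $B$ to $(a,b_1),\dots,(a,b_n)$ with $A=(a)$ and using perfectness of $A$ to pull $A\times\{0\}$ into the ideal they generate, but the Kutzko route is cleaner since it sidesteps the need to choose the element $a$ compatibly with the action $\theta$.
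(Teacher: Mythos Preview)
Your proposal is correct and follows essentially the same route as the paper: both arguments apply Theorem~\ref{thm:Kutzko} twice to obtain the chain $\omega(A\rtimes B)=\omega\bigl((A\rtimes B)/(A\rtimes B)^{(2)}\bigr)=\omega(B/B^{(2)})=\omega(B)$. The only cosmetic difference is that the paper computes $(a,b)*(c,d)$ explicitly and reads off $(A\rtimes B)^{(2)}=A\rtimes B^{(2)}$ directly, whereas you reach the same quotient $B/B^{(2)}$ by combining $A\times\{0\}\subseteq(A\rtimes B)^{(2)}$ with the surjection $\pi$; your extra remarks on Artinianness of the extension and finite weight are bookkeeping the paper leaves implicit.
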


\begin{proof}
    Let $a,c \in A$ and $b,d \in B$. Then
    \begin{align*}
        (a,b)*(c,d) & =-(a,b)+(a,b)\circ(c,d)-(c,d)\\
        & = (-a+a\circ\theta(b)(c)-c,b*d).
    \end{align*}
    Clearly, $A= A^{(2)} \subseteq (A \rtimes B)^{(2)}$ and thus $B^{(2)} \subseteq (A \rtimes B)^{(2)}$. This shows that
    $(A \rtimes B)^{(2)} = A \rtimes B^{(2)}$. By Theorem \ref{thm:Kutzko} we get
    \begin{align*}
        \omega(A \rtimes B) &= \omega((A \rtimes B)/(A\rtimes B)^{(2)})\\
        &= \omega((A\rtimes B)/(A\rtimes B^{(2)}))\\
        &= \omega(B/B^{(2)})\\
        &= \omega(B),
    \end{align*}
    and the result follows.
\end{proof}



Finally, Theorem~\ref{thm:Kutzko} leads also to a very short proof of Theorem~\ref{thm:Wiegold}.
Indeed, as $A$ is perfect, we obtain $A^{(2)} = A$. Since $A$ is Artinian, it follows by Theorem \ref{thm:Kutzko}
that $\omega(A)=\omega(A/A^{(2)})=\omega(0)=1$.

\section{Applications of the radical and weight of a skew brace}\label{sec:strbr}

In this section we provide some applications of the notion of weight of a skew left brace to study structure groups,
treated as skew left braces, of solutions of the YBE.

We shall start with properties concerning chain conditions. Recall that a skew left brace $A$ is said to be Noetherian if every ascending
chain of ideals of $A$ is eventually stationary. It is clear that a skew left brace is Noetherian if and only if all its ideals
have finite weight.

As was mentioned in Section \ref{sec:prel}, the structure group $G(X,r)$ of a finite non-degenerate solution $(X,r)$ of the YBE
is actually a skew left brace with the additive structure isomorphic to $(A(X,r),+)$. Moreover, the map
\[\lambda\colon (G(X,r),\circ)\to\Aut(A(X,r),+),\quad a\mapsto\lambda_a,\quad\lambda_a(b)=-a+a\circ b\]
is a homomorphism of groups. Since its image \[\mathcal{G}(X,r)=\langle\sigma_x:x\in X\rangle\]
(the so-called permutation group of $(X,r)$) is finite, $\Ker\lambda$ is of finite index in $G(X,r)$. Moreover, by
\cite[Theorem 2.7]{JKVA}, it follows that the group $A(X,r)$ is central-by-finite. Hence \[\Soc(G(X,r))=\Ker\lambda\cap\Cen(A(X,r))\]
is a finite index subgroup of $\Ker\lambda$. Therefore, $\Soc(G(X,r))$ is of finite index in $G(X,r)$. Since $G(X,r)$ is finitely generated,
it follows that $\Soc(G(X,r))$ is finitely generated as well. Thus $\Soc(G(X,r))$ is a finitely generated infinite abelian group.
This implies the following result.

\begin{pro}\label{pro:noeth}
    Let $(X,r)$ be a finite non-degenerate solution of the YBE.
    \begin{enumerate}
        \item The skew left brace $G(X,r)$ is not Artinian.
        \item The skew left brace $G(X,r)$ is Noetherian. 
    \end{enumerate}
\end{pro}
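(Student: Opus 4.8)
The plan is to establish both parts from the structural facts already assembled in the paragraph preceding the statement, namely that $\Soc(G(X,r))$ is a finitely generated infinite abelian group which sits inside $G(X,r)$ as a subgroup of finite index, and that $G(X,r)$ itself is finitely generated. Throughout I abbreviate $G=G(X,r)$ and $S=\Soc(G)$; recall that $S$ is an ideal of the skew left brace $G$.

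For part (1), I would argue that $S$, viewed as a trivial skew left brace (its additive and multiplicative structures coincide since $S\subseteq\Ker\lambda\cap\Cen(A(X,r))$), fails the descending chain condition on ideals, and then lift this to $G$. Since $S$ is a finitely generated infinite abelian group, it has $\Z$ as a direct summand, say $S\cong\Z\times S'$. Inside the trivial skew left brace on $\Z$ the subgroups $2^k\Z$ form a strictly descending chain of ideals, so $S$ has a strictly descending chain of ideals $I_1\supsetneq I_2\supsetneq\cdots$. Each $I_k$ need not a priori be an ideal of $G$, so the cleanest route is: because $S$ is central in $(A(X,r),+)=(G,+)$ and contained in $\Ker\lambda$, an additive subgroup of $S$ is automatically a left ideal of $G$ and normal in $(G,+)$; to get normality in $(G,\circ)$ one restricts attention to $\lambda$-invariant, $\circ$-normal subgroups of $S$. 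I would instead take the characteristic subgroups $mS$ for $m\geq 1$: each $mS$ is a fully characteristic subgroup of the abelian group $S$, hence invariant under every automorphism of $(G,+,\circ)$ that stabilizes $S$, and in particular under conjugation (additive and multiplicative) and under $\lambda$; thus each $mS$ is an ideal of $G$. Since $S$ is infinite and finitely generated, $\bigcap_m mS$ is the torsion subgroup of $S$, which is proper, so the chain $S\supseteq 2S\supseteq 4S\supseteq\cdots$ (or $S\supseteq 2!S\supseteq 3!S\supseteq\cdots$) is not eventually stationary. Hence $G$ is not Artinian.

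For part (2), I would use that a skew left brace is Noetherian if and only if all of its ideals have finite weight, together with the remark, already in the excerpt, that an ideal $I$ of a skew left brace has $\Rad(I)\subseteq\Rad(A)$ and more to the point that ideals of $G$ are in particular subgroups of the finitely generated group $(G,\circ)$. The key point: every ascending chain of ideals of $G$ is in particular an ascending chain of subgroups, and since $S$ has finite index in $G$ and $S$ is finitely generated abelian (hence Noetherian as a group, i.e. every subgroup is finitely generated), the group $G$ is Noetherian as a group — any subgroup $H$ of $G$ contains $H\cap S$ of finite index in $H$, and $H\cap S$ is finitely generated, so $H$ is finitely generated, and an ascending chain of subgroups each containing a fixed finite-index subgroup must stabilize; more simply, a finite extension of a Noetherian group is Noetherian. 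Therefore every ascending chain of ideals of $G$, being an ascending chain of subgroups of the Noetherian group $(G,\circ)$, is eventually stationary, so $G$ is a Noetherian skew left brace.

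The main obstacle I anticipate is purely bookkeeping in part (1): verifying that the chosen descending chain of subgroups of $S$ really consists of ideals of the ambient skew left brace $G$, not merely of $S$. The safe choice is the verbal characteristic subgroups $mS$, since being characteristic in the abelian group $S$ forces invariance under the $\lambda$-action and under both conjugations restricted to $S$ (as $S$ is an ideal, these are automorphisms of $S$), which is exactly what is needed for $mS\lhd G$ in the brace sense; the fact that $S$ is central in $(G,+)$ and lies in $\Ker\lambda$ makes the left-ideal and additive-normality conditions automatic. Part (2) is essentially immediate once one records that a finite-index extension of a finitely generated abelian group is a Noetherian group and that brace-ideal chains are, a fortiori, subgroup chains.
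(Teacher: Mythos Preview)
Your proposal is correct and matches the paper's intended argument: the proposition is stated without proof, immediately after the observation that $\Soc(G(X,r))$ is a finitely generated infinite abelian group of finite index in $G(X,r)$, and you have simply filled in why those facts suffice. One minor slip to clean up in part~(1): an arbitrary additive subgroup of $S$ is \emph{not} automatically a left ideal of $G$---the containment $S\subseteq\Ker\lambda$ says $\lambda_s=\id$ for $s\in S$, not that each $\lambda_a$ fixes $S$ pointwise---but your characteristic-subgroup argument for $mS$ already supplies the needed $\lambda$-invariance (and in fact $a\circ s\circ a'=\lambda_a(s)$ for $s\in S$, so multiplicative normality also reduces to $\lambda$-invariance), so the proof stands.
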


\begin{pro}\label{pro:w}
	Assume that $(X,r)$ is a finite non-degenerate solution of the YBE with $m$ orbits.
	Then \[\omega(G(X,r)/\Soc(G(X,r)))\leq \omega(G(X,r))\leq m.\] 
    Moreover, the solution $(X,r)$ is trivial if and only if $\omega(G(X,r))=|X|$.
    \begin{proof}
        The multiplicative group of $G(X,r)$ is generated by $X$. Hence it is generated, as an ideal, by a complete set of
        representatives of the $m$ orbits, which implies that $\omega(G(X,r))\leq m$. The lower bound for $\omega(G(X,r))$
        is obvious.
       
        Let $n=|X|$. If $(X,r)$ is trivial then $G(X,r)\cong \Z^n$ and thus we get $\omega(G(X,r))=\omega(\Z^n)=n$.
        Now assume that $\omega(G(X,r))=n$. Then $X$ is a minimal generating set of $G(X,r)$. We claim that $\lambda_x(y)=y$
        for all $x,y\in X$. Otherwise, if $\lambda_x(y)=z$ for some $y\ne z\in X$ then $X\setminus\{z\}$ is a generating set
        of size $<n$. The minimality of $X$ implies that $X$ embeds into $G(X,r)$ and hence
        \[r(x,y)=r_{G(X,r)}(x,y)=(\lambda_x(y),\lambda_x(y)'\circ x\circ y)=(y,y'\circ x\circ y).\]
        If there exist $u,v\in X$ such that $v'\circ u\circ v=w$ for some $u\ne w\in X$ then $X\setminus\{w\}$ is a generating
        set of size $<n$. Therefore, $r(x,y)=(y,x)$ for all $x,y\in X$.
    \end{proof}
\end{pro}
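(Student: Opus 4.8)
The plan is to treat the chain of inequalities and the characterization of triviality separately. For the inequalities, the left one, $\omega(G(X,r)/\Soc(G(X,r)))\le\omega(G(X,r))$, is nothing but the general fact that the image of an ideal-generating set under a surjection of skew left braces generates the target, so passing to the quotient by the ideal $\Soc(G(X,r))$ cannot increase the weight. For the right one, I would use that the underlying group $(G(X,r),\circ)$ is generated by $\iota(X)$, hence $\iota(X)$ generates $G(X,r)$ as a skew brace, together with the fact that an ideal is in particular a left ideal and therefore stable under every $\lambda_a$. The defining compatibility $r_{G(X,r)}(\iota\times\iota)=(\iota\times\iota)r$ gives $\lambda_{\iota(x)}(\iota(y))=\iota(\sigma_x(y))$, so if $z$ lies in the $\mathcal{G}(X,r)$-orbit of $y$ then $\iota(z)$ belongs to every left ideal containing $\iota(y)$; choosing one representative from each of the $m$ orbits then yields an ideal-generating set of size $m$.

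For the equivalence, the ``if'' direction is immediate: triviality of $(X,r)$ means $r(x,y)=(y,x)$, so the defining relations of $G(X,r)$ are exactly $xy=yx$, whence $G(X,r)\cong\Z^{|X|}$ as a trivial skew brace; and $\Z^{|X|}$ has weight $|X|$, since it surjects onto the elementary abelian group $\Z^{|X|}/2\Z^{|X|}\cong C_2^{|X|}$, which cannot be generated by fewer than $|X|$ elements. The substance is the ``only if'' direction, which I would prove by exploiting minimality. Assume $\omega(G(X,r))=|X|=:n$, so that $X$ is a minimal generating set. First I would deduce that $\iota$ is injective: if $\iota(x)=\iota(x')$ for distinct $x,x'\in X$, then $\iota(X\setminus\{x\})$ still contains $\iota(x)$, hence generates $G(X,r)$ with fewer than $n$ elements, a contradiction. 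Identify $X$ with $\iota(X)\subseteq G(X,r)$ from now on.

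Next I would show $\lambda_x(y)=y$ for all $x,y\in X$: otherwise $\lambda_x(y)=z$ with $z\ne y$, and since the ideal $(X\setminus\{z\})$ contains $y$ and is a left ideal, it is $\lambda$-stable and hence contains $z$, so $X\setminus\{z\}$ generates $G(X,r)$, contradicting minimality. Consequently $r(x,y)=r_{G(X,r)}(x,y)=(\lambda_x(y),\lambda_x(y)'\circ x\circ y)=(y,\,y'\circ x\circ y)$. Finally, suppose $y'\circ x\circ y=w$ with $w\ne x$; necessarily $w\ne y$ as well, since $w'\circ x\circ w=w$ would force $x=w$. Then $x,y\in X\setminus\{w\}$, and because the ideal $(X\setminus\{w\})$ is in particular a subgroup of $(G(X,r),\circ)$ it contains $y'\circ x\circ y=w$, so once more $X\setminus\{w\}$ generates $G(X,r)$ — a contradiction. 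Hence $r(x,y)=(y,x)$ for all $x,y\in X$, i.e.\ $(X,r)$ is trivial.

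The main obstacle — or at least the point needing care — is the interface between the combinatorial solution $(X,r)$ and the skew brace $G(X,r)$: one must first extract injectivity of $\iota$ from minimality before the translations $\sigma_x(y)\leftrightarrow\lambda_x(y)$ and $\tau_y(x)\leftrightarrow y'\circ x\circ y$ are legitimate, and then, at each deletion step, verify that removing the relevant element of $X$ really does leave a generating set, using $\lambda$-stability of left ideals for the $\sigma$-coordinate and closure of ideals under the operation $\circ$ for the $\tau$-coordinate.
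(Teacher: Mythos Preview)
Your argument is correct and follows the same route as the paper's: bound the weight above via orbit representatives, and for the converse deduce triviality by successively ruling out nontrivial $\sigma$- and $\tau$-moves through minimality of $X$; you are in fact more careful than the paper in first deriving injectivity of $\iota$ from $\omega(G(X,r))=|X|$ before making the identifications, and in checking $w\ne y$ before deleting $w$. One small point on the upper bound: you only invoke $\mathcal{G}(X,r)=\langle\sigma_x\rangle$-orbits via $\lambda$-stability of left ideals, whereas the ``$m$ orbits'' of a solution are usually those under $\langle\sigma_x,\tau_y\rangle$ --- but the same deletion idea works, since $\tau_x(y)=c'+\lambda_{c'}(y)-c'$ with $c=\lambda_y(x)$ lies in any $\lambda$-stable additively normal subgroup containing $y$, so an ideal containing one element of a full orbit contains the whole orbit.
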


Note that if $(X,r)$ is a finite non-degenerate involutive solution of the YBE then $G(X,r)/\Soc(G(X,r))\cong\mathcal{G}(X,r)$.
Hence, by Proposition~\ref{pro:w}, we obtain $\omega(\mathcal{G}(X,r))\leq\omega(G(X,r))$. Furthermore, if $(X,r)$
is also indecomposable then $\mathcal{G}(X,r)$ and $G(X,r)$ both have weight equal to one.

\begin{exa}
    Let $(X,r)$ be a finite non-degenerate solution of the YBE. Then the map $\deg\colon (G(X,r),+)\to\Z$, given by $x\mapsto 1$ for $x\in X$,
    is a group homomorphism. Since $\deg(x*y)=0$ for all $x,y\in G(X,r)$, it follows that $G(X,r)$ is not perfect. 
 \end{exa}

For a skew left brace $A$, \[\Fix(A)=\{a\in A:\lambda_b(a)=a\text{ for all }b\in A\}\] is a left ideal of $A$ (see \cite{JKVAV}).
The following result can be understood as a brace-theoretic analog of Schur's theorem, which says that the derived subgroup
$[G,G]$ of a group $G$ is finite provided $\Cen(G)$ is of finite index in $G$; the converse is true if $G$ is finitely generated.

\begin{thm}\label{thm:Schur}
    Let $A$ be a skew left brace such that $A/\Ann(A)$ is finite. Then $A^{(2)}$ and $[A,A]_{+}$ are finite.
    The converse holds if the additive group $(A,+)$ is finitely generated.
\end{thm}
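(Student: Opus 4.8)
The plan is to imitate the classical proof of Schur's theorem via transversals, working additively first. Write $n=[A:\Ann(A)]$ and fix a transversal $t_1,\dots,t_n$ of $\Ann(A)$ in $A$ (the same set serves as a transversal for both the additive and the multiplicative cosets, since $\Ann(A)$ is an ideal and $A/\Ann(A)$ is finite of order $n$). Every element of $A$ is uniquely $t_i+z$ with $z\in\Ann(A)$, and because $\Ann(A)\subseteq\Cen(A,+)$ the additive commutator $[t_i+z,t_j+w]_+=[t_i,t_j]_+$ depends only on the cosets; hence $[A,A]_+$ is generated by the finitely many elements $[t_i,t_j]_+$. To get finiteness one defines, as in Schur's argument, an additive ``factor set'' $f(i,j)\in\Ann(A)$ by $t_i+t_j=t_{\mu(i,j)}+f(i,j)$, and shows that each $[t_i,t_j]_+$ lies in the finite subgroup of $\Ann(A)$ generated by all the $f(i,j)$; the commutator identities together with the $\Ann(A)$-centrality reduce an arbitrary additive commutator to a bounded product of factor-set values. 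Since $\Ann(A)$ is abelian and the factor set is finite, this subgroup is finitely generated abelian of bounded exponent dividing $n$, hence finite. Thus $[A,A]_+$ is finite.

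Next I would handle $A^{(2)}$, the additive subgroup generated by all $x*y$. Here the key point is that $*$ also ``factors through'' $\Ann(A)$ in a controlled way: if $z,w\in\Ann(A)$ then $z*a=\lambda_z(a)-a=0$ (since $z\in\Ker\lambda$) and $a*z=\lambda_a(z)-z$, which lies in $[A,A]_+$-plus-torsion — more precisely, using $\Ann(A)\subseteq\Soc(A)\cap\Cen(A,\circ)$ one gets $a*z\in\Ann(A)$ actually equals $\lambda_a(z)-z$, and since $\lambda_a$ has finite order modulo fixing $\Ann(A)$... so I would instead argue directly: the identities \eqref{*} show that $(x_1+x_2)*y$ and $x*(y_1+y_2)$ differ from $x_1*y+x_2*y$, resp. $x*y_1+x*y_2$, by elements of $A^{(2)}$ conjugated additively, so modulo the (normal) subgroup generated by commutators the assignment $(x,y)\mapsto x*y$ is biadditive and factors through $A/\Ann(A)\times A/\Ann(A)$, which is finite. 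Hence $A^{(2)}/(A^{(2)}\cap[A,A]_+)$ is a finitely generated abelian group of bounded exponent (the exponent bound again coming from $n$ via $x*y$ for $x$ or $y$ in $\Ann(A)$ being trivial or central), thus finite; combined with finiteness of $[A,A]_+$ this gives $A^{(2)}$ finite.

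For the converse, assume $(A,+)$ is finitely generated and $A^{(2)}$, $[A,A]_+$ are finite. Finiteness of $[A,A]_+$ makes $(A,+)$ finitely generated abelian-by-finite, and finiteness of $A^{(2)}$ means $\lambda_a(b)-b$ and $-a+a\circ b-b$ take only finitely many values; one then checks that $C=\{a\in A:\lambda_a=\id\text{ and }a\text{ is additively and multiplicatively central}\}=\Ann(A)$ has finite index. Concretely: $\Cen(A,+)$ has finite index in $(A,+)$ since $(A,+)$ is finitely generated with finite commutator subgroup (classical), and on that finite-index subgroup the map $a\mapsto\lambda_a$ lands in a group of automorphisms of $(A,+)$ that act trivially on the finite-index free-abelian part modulo the finite $A^{(2)}$-torsion, so $\Ker\lambda$ has finite index; intersecting with $\Cen(A,+)$ and with $\Cen(A,\circ)$ (the latter also finite-index, since $a\circ b-b\circ a$ is controlled by $A^{(2)}$ and $[A,A]_+$) gives that $\Ann(A)$ has finite index. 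I expect the main obstacle to be the bookkeeping in the converse — pinning down exactly why $\Ker\lambda$ and $\Cen(A,\circ)$ have finite index requires translating the finiteness of $A^{(2)}$ into a statement about how $\lambda$ and $\circ$ deviate from $\id$ and $+$ on a suitable finite-index subgroup, using that a finitely generated abelian-by-finite group has only finitely many automorphisms acting trivially modulo a fixed finite subgroup.
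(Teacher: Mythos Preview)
Your forward direction is workable but takes a different route from the paper and has one soft spot. The paper does not redo transversals and factor sets; instead it forms the semidirect product $\Gamma=(A,+)\rtimes\mathcal{G}$ with $\mathcal{G}=\{\lambda_a:a\in A\}$, observes that $Z=\Ann(A)\times\{\id\}$ is central of finite index in $\Gamma$, and applies the classical Schur theorem once to $\Gamma$. Then both $[A,A]_+$ and $A^{(2)}$ are read off inside $[\Gamma,\Gamma]$ via $[(0,\lambda_a),(b,\id)]=(a*b,\id)$. This is shorter than your two-stage argument. Your treatment of $[A,A]_+$ is fine (indeed $\Ann(A)\subseteq\Cen(A,+)$ already lets you invoke Schur directly for $(A,+)$). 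For $A^{(2)}$ your key observations are correct: since $\Ann(A)\subseteq\Fix(A)$ one has $x*z=0$ and $(x+z)*y=x*y$ for $z\in\Ann(A)$, so $x*y$ takes at most $n^2$ values. The part you wave at, the exponent bound, really needs to be said: in $(A,+)_{\ab}$ the identity $x*(y_1+y_2)=x*y_1+x*y_2$ gives $n\cdot(x*y)=x*(n\cdot y)$, and $n\cdot y\in\Ann(A)$ since $|A/\Ann(A)|=n$, whence $n\cdot(x*y)\in[A,A]_+$. With that line added your argument closes.

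The converse is where you have a genuine gap, and you correctly flag it. Showing $\Cen(A,+)$ and $\Ker\lambda$ have finite index is fine (the latter because $\lambda_a$ is determined by $(\lambda_a(x_i))_i=(a*x_i+x_i)_i$ and $a*x_i\in A^{(2)}$ is finite). But your plan for $\Cen(A,\circ)$ is only a hope: the relation between $a\circ b-b\circ a$ and $A^{(2)},[A,A]_+$ does not by itself give a finite-index statement, and checking that $x_i*a=0$ for all additive generators $x_i$ forces $b*a=0$ for every $b$ is not straightforward, since $(x+y)*a$ unfolds via $x\circ\lambda_x^{-1}(y)$ rather than additively in the first variable. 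The paper sidesteps this entirely: fixing additive generators $x_1,\dotsc,x_n$, it defines
\[
f\colon A/\Ann(A)\longrightarrow (A^{(2)})^n\times(A^{(2)})^n\times([A,A]_+)^n,\qquad
\overline{a}\longmapsto\bigl(a*x_i,\;x_i*a,\;[a,x_i]_+\bigr)_i,
\]
checks well-definedness, and then proves injectivity by a short direct computation: from $x_i*a=x_i*b$ one gets $-a+b\in\Fix(A)$; from $a*x_i=b*x_i$ one gets $\lambda_a=\lambda_b$, hence $-a+b=a'\circ b\in\Ker\lambda$; from $[a,x_i]_+=[b,x_i]_+$ one gets $-a+b\in\Cen(A,+)$. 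Together $-a+b\in\Fix(A)\cap\Soc(A)=\Ann(A)$. This single injection into a finite set replaces your three separate finite-index arguments and is exactly what is needed to close the proof.
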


\begin{proof}
    Assume first that $A/\Ann(A)$ is finite. Let $X=(A,+)$ and $\mathcal{G}=\{\lambda_a:a \in A\}$. Note that
    \[|\mathcal{G}|\leq |A/\Soc(A)|\leq |A/\Ann(A)|<\infty.\]
    Next, consider the group $\Gamma = X \rtimes \mathcal{G}$, where the action of $\mathcal{G}$ on $X$ is given by applying the map
    $\lambda$. As $\Ann(A)\subseteq\Fix(A) \cap \Cen(X)$, it follows that $Z= \Ann(A) \times \{\id\}$ is a central subgroup of
    $\Gamma$. Because \[|\Gamma/Z|=|A/\Ann(A)|\cdot|\mathcal{G}|<\infty,\] we get $|\Gamma/\Cen(\Gamma)|<\infty$. Hence Schur's theorem
    implies that $[\Gamma,\Gamma]$ is a finite group. Moreover, if $a,b\in A$ then
    \[[(0,\lambda_a),(b,\id)]=(\lambda_a(b)-b,\id)=(a*b,\id)\in[\Gamma,\Gamma].\]
    This shows that $A^{(2)}$ is finite. Furthermore, as $[X,X]$ embeds in $[\Gamma,\Gamma]$, we conclude that $[X,X]$ is finite.
    Hence the first part follows.
    
    To prove the second part, assume that both $A^{(2)}$ and $[A,A]_+$ are finite. Moreover, assume that
    $x_1,\dotsc,x_n\in A$ generate the group $(A,+)$. If $a \in A$ and $b \in \Ann(A)$ then, by \eqref{*}, we get
    \[(a+b)*x_i = (b\circ a)*x_i = b*(a*x_i) + (a*x_i) + (b*x_i) = a*x_i\] and \[ x_i*(a+b) = x_i*a +a + x_i*b - a = x_i*a\]
    for each $1 \leq i \leq n$. Moreover, $\Ann(A) \subseteq\Cen(A,+)$ implies that $[a+b,x_i]_+ = [a,x_i]_+$ for each $1\leq i\leq n$.
    Hence, the map \[f\colon A/\Ann(A)\to(A^{(2)})^n\times(A^{(2)})^n \times([A,A]_+)^n,\] given by the formula
    \[f(\overline{a})=(a*x_1,\dotsc,a*x_n,x_1*a,\dotsc,x_n*a,[a,x_1]_+,\dotsc,[a,x_n]_+),\] is well-defined.
    We claim that $f$ is injective. To prove this let $a,b \in A$ be such that $f(\overline{a}) = f(\overline{b})$.
    Then $a*x_i = b*x_i$, $x_i*a=x_i*b$ and $[a,x_i]_+ = [b,x_i]_+$ for each $1\leq i \leq n$. Therefore,
    \begin{align*}
        x_i*(-a+b) &= x_i*(-a) -a+x_i*b + a \\
        & = x_i*(-a)-a+x_i*a + a\\
        & = x_i*(-a+a)\\
        & = 0,
    \end{align*}
    which yields $-a+b \in \Fix(A)$. Hence
    \begin{align*}
        a'\circ b & = a'\ + \lambda_{a'}(b)= a' + \lambda_{a'}(a -a + b)\\
        & = a' + \lambda_{a'}(a) -a+b = -a+b.
    \end{align*}
    Moreover, as $a*x_i = b*x_i$, we get $\lambda_a(x_i) = \lambda_b(x_i)$. Since this holds for all additive generators $x_1,\dotsc,x_n$
    of $A$, it follows that $\lambda_a = \lambda_b$. Therefore, $a' \circ b = - a +b \in \Ker\lambda$.
    Lastly, we claim that $-a + b \in\Cen(A,+)$. Indeed, as $[a,x_i]_+ = [b,x_i]_+$, it follows that
    \begin{align*}
        [-a+b,x_i]_+ & = -a+b+x_i-b+a-x_i\\
        & = -a+[b,x_i]_+ - [a,x_i]_+ +a = 0.
    \end{align*}
    In conclusion, we get $-a+b\in\Fix(A)\cap\Soc(A)=\Ann(A)$, which yields $\overline{a}=\overline{b}$.
    Because both $A^{(2)}$ and $[A,A]_+$ are finite, we conclude by injectivity of $f$ that $A/\Ann(A)$ is finite as well.
\end{proof}

\section{Torsion in structure groups}\label{sec:torsion}

Let $(X,r)$ be a finite non-degenerate solution of the YBE. In this section we study the torsion of the multiplicative group of $G(X,r)$.
It is known that if $(X,r)$ is involutive then $G(X,r)$ is torsion-free (see \cite{MR1637256}). However, as Example~\ref{exa:tf}
shows, the group $G(X,r)$ may be torsion-free also for non-involutive solutions.
In this section we determine precisely when the structure group is torsion-free.

\begin{exa}\label{exa:tf}
	Let $X=\{1,2,3,4\}$ and $r(x,y)=(\sigma(y),\tau(x))$, where $\sigma=(12)$ and $\tau=(34)$. Then $(X,r)$ is a non-involutive
    non-degenerate solution. Moreover, the skew left brace $G(X,r)$ is trivial and both of its groups (additive and multiplicative)
    are isomorphic to $\Z\times\Z$. 
\end{exa}

Recall that if $(X,r)$ is a solution of the YBE then  the derived solution $(X,r_\triangleright)$ of $(X,r)$ is the solution defined
as $r_\triangleright(x,y)=(y,y\triangleright x)$, where \[y\triangleright x=\sigma_y(\tau_{\sigma_x^{-1}(y)}(x)).\] The derived solution
$(X,r_\triangleright)$ is said to be indecomposable if the action of the group, generated by all permutations $x\mapsto y\triangleright x$,
on $X$ is transitive. Moreover, it is called a quandle provided $x\triangleright x=x$ for each $x\in X$.

\begin{thm}\label{thm:torsion}
	Let $(X,r)$ be a non-degenerate finite solution of the YBE. If the derived
	solution $(X,r_\triangleright)$ is indecomposable then $G(X,r)*G(X,r)$ is
	finite. Moreover, if $G(X,r)\ncong\Z$ then the multiplicative group of
	$G(X,r)$ has torsion. 
\end{thm}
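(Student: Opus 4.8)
The plan is to relate the indecomposability of the derived solution $(X,r_\triangleright)$ to a perfectness‐type property of an associated skew left brace, and then to invoke the analog of Schur's theorem (Theorem~\ref{thm:Schur}) together with the structural information on $G(X,r)$ from Section~\ref{sec:strbr}. First I would recall that the derived solution $(X,r_\triangleright)$ is the solution attached to the \emph{additive} structure of the skew left brace $G(X,r)$: the permutations $x\mapsto y\triangleright x$ are precisely the $\lambda$‐maps of the trivial skew left brace on $(A(X,r),+)$, so indecomposability of $(X,r_\triangleright)$ says that the image of $X$ in the additive group $A=A(X,r)$ generates $(A,+)$ under the action of $\{\lambda_a : a\in A\}$ transitively on that image — equivalently, $A$ is generated as an ideal (indeed as a left ideal) by a single element, i.e.\ $\omega(A)=1$ in the appropriate sense. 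The key consequence I want to extract is that the skew left brace $B=A/\Soc(A)=G(X,r)/\Soc(G(X,r))$, whose multiplicative group is the finite permutation group $\mathcal{G}(X,r)$ acting transitively on the orbit structure, is forced by indecomposability to have $\Ann(B)=0$ or, more to the point, that $G(X,r)^{(2)}=G(X,r)*G(X,r)$ has finite additive closure.

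The main line of argument runs as follows. By Proposition~\ref{pro:noeth}, $G(X,r)$ is Noetherian, and by the discussion preceding it, $\Soc(G(X,r))$ is a finitely generated central (in both operations) subgroup of finite index; thus $G=G(X,r)$ has $G/\Ann(G)$ of finite order exactly when the residual action is faithful modulo the center. Indecomposability of the derived solution guarantees that the permutation action controlling $\lambda$ is transitive, which I would use to show that $\Fix(G)\cap\Soc(G)$ — i.e.\ $\Ann(G)$ — has finite index in $G$: concretely, transitivity forces every element of $X$ to be conjugate (via the $\lambda$‐action) into a fixed coset, so the orbit‐counting argument of Proposition~\ref{pro:w} collapses and $G/\Ann(G)$ becomes finite. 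Once $G/\Ann(G)$ is finite, Theorem~\ref{thm:Schur} immediately gives that $G^{(2)}=G*G$ and $[G,G]_+$ are finite, which is the first assertion. For the second assertion, I would argue by contraposition: if the multiplicative group of $G$ is torsion‐free and $G^{(2)}$ is finite, then $G^{(2)}=0$ (a finite normal subgroup of a torsion‐free group is trivial), so $G$ is a trivial skew left brace; then $(G,\circ)=(G,+)=A(X,r)$ is a finitely generated torsion‐free abelian group, and using the degree homomorphism from the Example after Proposition~\ref{pro:w} one sees $X$ maps into a single coset of rank‐one type, forcing the additive group — hence $G$ — to be $\Z$.

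The hard part will be the precise dictionary between ``indecomposable derived solution'' and a brace‐theoretic finiteness statement, i.e.\ establishing rigorously that indecomposability of $(X,r_\triangleright)$ implies $G(X,r)/\Ann(G(X,r))$ is finite (equivalently that the relevant $\lambda$‐orbit data is finite). This requires carefully unwinding the definition $y\triangleright x=\sigma_y(\tau_{\sigma_x^{-1}(y)}(x))$ in terms of $\lambda$ on the structure skew brace, checking that the group generated by the maps $x\mapsto y\triangleright x$ is a quotient of (or closely tied to) $\mathcal{G}(X,r)$, and then deducing transitivity of that finite group's action from indecomposability. Everything downstream — the appeal to Theorem~\ref{thm:Schur}, the finiteness of $G*G$, and the torsion dichotomy via the degree map and the triviality of finite normal subgroups in torsion‐free groups — is then routine. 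I would also need the small observation that if $G*G$ is finite and nonzero then it meets every finite‐index subgroup nontrivially in a torsion element of $(G,\circ)$, which is what produces actual torsion in the multiplicative group when $G\ncong\Z$; this uses that $G*G\subseteq\Soc(G)^{\perp}$ is normal and that a nontrivial finite normal subgroup of $(G,\circ)$ consists of torsion elements.
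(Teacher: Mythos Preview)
Your proposal contains a genuine gap in the main step, and it stems from a misreading of the derived solution. The maps $x\mapsto y\triangleright x$ are \emph{not} $\lambda$-maps of the trivial brace on $(A(X,r),+)$ --- in a trivial brace every $\lambda_a$ is the identity. Rather, the defining relations of $A(X,r)$ give $y\triangleright x=-y+x+y$ in $(G(X,r),+)$, so indecomposability of $(X,r_\triangleright)$ is a statement about the \emph{inner} (conjugation) action of the additive group on the image of $X$, not about the $\lambda$-action. Consequently your transitivity heuristic for forcing $\Ann(G)$ to have finite index never gets started: the $\lambda$-action of the finite group $\mathcal{G}(X,r)$ on the free-abelian part of $\Soc(G)$ can easily have fixed-point set of infinite index, and nothing in your sketch rules that out.

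The paper avoids this entirely by working on the additive group and invoking the \emph{classical} Schur theorem rather than Theorem~\ref{thm:Schur}. The key external input is \cite[Lemma 2.2]{MR3671570}: indecomposability of the derived rack yields $\Ker(\deg)=[A,A]_+$ for $A=(G(X,r),+)$. Since $A$ is central-by-finite, classical Schur makes $[A,A]_+$ finite, and the elementary observation $\deg(a*b)=0$ gives $G*G\subseteq\Ker(\deg)=[A,A]_+$, finishing the first claim. Thus your logic is inverted: finiteness of $G/\Ann(G)$ is not an input but a \emph{consequence}, obtainable a posteriori from the converse direction of Theorem~\ref{thm:Schur} once $G*G$ and $[A,A]_+$ are known to be finite. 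Your contrapositive argument for the torsion assertion is close in spirit to the paper's, but it too ultimately needs the identification $\Ker(\deg)=[A,A]_+$ to force rank one once the brace is trivial and $(A,+)$ is abelian; without that lemma you cannot conclude $G\cong\Z$.
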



\begin{proof}
	Let $A$ be the additive group of the skew left brace $G(X,r)$. As before,
	let $\deg\colon A\to\Z$ denote the homomorphism of groups given by
	$\deg(x)=1$ for all $x\in X$. Then, by \cite[Lemma 2.2]{MR3671570}, we have
	$\Ker(\deg)=[A,A]$. Moreover, recall that $A$ is central-by-finite (see
	\cite[Theorem 2.7]{JKVA}). Thus, by Schur's theorem, $[A,A]$ is finite.  As
	$G(X,r)*G(X,r)\subseteq\Ker(\deg)=[A,A]$, it follows that $G(X,r)*G(X,r)$
	is finite.
    
	To prove the second claim, suppose that $G(X,r)\ncong\Z$ and assume
	$G(X,r)*G(X,r)=0$. Then the skew left brace $G(X,r)$ is trivial.  Hence the
	multiplicative group of $G(X,r)$ is isomorphic to $A$. If $[A,A]=0$ then
	$A$ is abelian, and thus finitely generated free abelian group. Since, by
	assumption, the derived solution $(X,r_\triangleright)$ is indecomposable,
	it follows that $G(X,r)\cong A\cong\Z$, a contradiction.  In this case, the
	multiplicative group of $G(X,r)$ has a non-trivial finite subgroup
	isomorphic to $[A,A]$.
\end{proof}

\begin{rem}
	In the proof of Theorem~\ref{thm:torsion} we have used \cite[Lemma
	2.2]{MR3671570}, which holds for racks and not only for quandles.
\end{rem}

An example of a solution of the YBE satisfying all the requirements of
Theorem~\ref{thm:torsion} is the following.

\begin{exa}
    Let $X=C_3$ be the cyclic group of size three and \[r(x,y)=(xy^{-1}x^{-1},xy^2)=(y^2,xy^2).\] Then $(X,r)$ is a non-degenerate
    finite solution of the YBE with the multiplicative group of $G(X,r)$ isomorphic to \[G=\langle x,y,z\mid xy=yx=z^2,\,xz=zx=y^2\rangle,\]
    and the additive group of $G(X,r)$ isomorphic to \[A=\langle a,b,c\mid ab=bc=ca,\,cb=ba=ac\rangle.\]
    Routine calculations show that $G\cong\Z\times C_3$, so it has torsion. An element of order three in $G$ is $yz^{-1}$.
\end{exa}

\begin{thm}
    Let $(X,r)$ be a finite non-degenerate solution of the YBE. Then the group $G(X,r)$ is torsion-free if and only if
    the injectivization of $(X,r)$ is an involutive solution.
\end{thm}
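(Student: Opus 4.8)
The plan is to prove the two implications separately, in each case reducing to the classical fact that the structure group of a finite non-degenerate \emph{involutive} solution is torsion-free \cite{MR1637256,MR1722951}.

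For the ``if'' direction, assume the injectivization $(\tilde X,\tilde r)$ of $(X,r)$ is involutive. It is a finite non-degenerate involutive solution (non-degeneracy is inherited: $\tilde r$ is the restriction of the non-degenerate solution $r_{G(X,r)}$ of the skew left brace $G(X,r)$ to the finite invariant subset $\tilde X\times\tilde X$), so its structure group $G(\tilde X,\tilde r)$ is torsion-free; since $G(\tilde X,\tilde r)\cong G(X,r)$, the group $G(X,r)$ is torsion-free.

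For the ``only if'' direction, write $G=G(X,r)$, let $A=(G,+)\cong A(X,r)$ be the additive group, and let $\lambda\colon(G,\circ)\to\Aut(A)$ be the structure homomorphism; its image $\mathcal G(X,r)$ is finite, so there is $d\geq1$ with $\lambda_a^{d}=\id$ for all $a\in G$. The key step is to show that $(G,\circ)$ being torsion-free forces $A$ to be torsion-free. A straightforward induction using $x\circ y=x+\lambda_x(y)$ and $\lambda_{x\circ y}=\lambda_x\lambda_y$ gives $a^{\circ n}=a+\lambda_a(a)+\dots+\lambda_a^{n-1}(a)$ for $n\geq1$, and then, grouping the summands into consecutive blocks of length $d$ and using $\lambda_a^{d}=\id$, one obtains $a^{\circ(md)}=m\cdot(a^{\circ d})$ for every $m\geq1$, where $m\cdot(-)$ denotes the $m$-fold additive sum. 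Now $A=A(X,r)$ is central-by-finite by \cite[Theorem~2.7]{JKVA}, hence an FC-group (every conjugacy class is finite), and so its torsion elements form a subgroup; therefore, if $a\in A$ has finite additive order, then so does each $\lambda_a^{k}(a)$, hence so does $a^{\circ d}=a+\lambda_a(a)+\dots+\lambda_a^{d-1}(a)$, say of additive order $m_0$, and then $a^{\circ(m_0 d)}=m_0\cdot(a^{\circ d})=0$ forces $a=0$ because $(G,\circ)$ is torsion-free. Thus $A$ is torsion-free.

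It remains to assemble the pieces. Since $A$ is central-by-finite, Schur's theorem gives that $[A,A]_+$ is finite; being a subgroup of the torsion-free group $A$, it is trivial, so $A$ is abelian and $G$ is a left brace. By Rump's theorem \cite{MR2278047} the solution $r_G$ attached to a left brace is involutive, hence so is its restriction to the invariant subset $\tilde X\times\tilde X$, which is precisely the injectivization $(\tilde X,\tilde r)$. The main obstacle is the key step above, namely transferring torsion-freeness from the multiplicative to the additive group, via the identity $a^{\circ(md)}=m\cdot(a^{\circ d})$ together with the FC-group structure of $A(X,r)$; everything else (Schur's theorem, Rump's construction, and the behaviour under injectivization) is routine.
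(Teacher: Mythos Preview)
Your proof is correct, but your route to ``$(G,+)$ is abelian'' differs from the paper's. You first prove that $(G,+)$ is torsion-free by the identity $a^{\circ(md)}=m\cdot(a^{\circ d})$ together with the fact that torsion elements in an FC-group form a subgroup, and only then conclude that the finite group $[A,A]_+$ vanishes. The paper bypasses this computation entirely: since $[G,G]_+$ is characteristic in $(G,+)$, it is $\lambda$-invariant and hence a (finite) subgroup of $(G,\circ)$; torsion-freeness of $(G,\circ)$ kills it immediately. This is shorter and avoids the FC-group detour.

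On the other hand, your endgame is cleaner. Once $(G,+)$ is abelian you invoke directly that the solution $r_G$ attached to a left brace is involutive, and restrict to $\tilde X$. The paper instead appeals to an external result to get that $(G,+)$ is free abelian and then to an argument from \cite{JKVA} to deduce involutivity of the injective solution; your observation that abelianness of $(G,+)$ already forces $r_G$ (hence $\tilde r$) to be involutive makes those extra citations unnecessary. So each proof has a step that could profitably be replaced by the corresponding step from the other.
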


\begin{proof}
    Clearly, we may assume that $(X,r)$ is an injective solution.
    
    Suppose $G=G(X,r)$ is torsion-free. Recall that $G(X,r)$ has a natural skew left brace structure with $(G,+)\cong A(X,r)$.
    Denote by $[G,G]_+$ the additive commutator subgroup of $(G,+)$. Since $A(X,r)$ is central-by-finite,
    by Schur's theorem $[G,G]_+$ is finite. As $[G,G]_+$ is a characteristic subgroup of $(G,+)$, it follows that
    $[G,G]_+$ is a subgroup of $(G,\circ)$. Because $G$ is torsion-free, this subgroup has to be trivial. Hence,
    $(G,+) \cong A(X,r)$ is abelian. Therefore, by \cite[Theorem 7.15]{MR3974961}, $(G,+)$ is free abelian.
    Thus, because of the injectivity of $(X,r)$, using similar arguments as in the last paragraph of the proof of
    \cite[Theorem 2.7]{JKVA}), we conclude that the solution $(X,r)$ is involutive.
    The reverse implication is well-known \cite{MR1637256}, hence the result is proved.
\end{proof}







We do not know how to compute the weight of the structure skew brace of a
finite solution of the YBE. Moreover, is there a connection between the weight
of a left brace $A$ and the weight of the left brace $G(A,r_A)$, where
$(A,r_A)$ is the solution associated to $A$?

\subsection*{Acknowledgments}
The first author is supported in part by Onderzoeksraad of Vrije Universiteit
Brussel and Fonds voor Wetenschappelijk Onderzoek (Flanders), grant G016117.
The second author is supported by Fonds voor Wetenschappelijk Onderzoek
(Flanders), grant G016117. The third author is supported by Fonds voor
Wetenschappelijk Onderzoek (Flanders), via an FWO Aspirant-mandate.  The fourth
author is supported in part by PICT 2016-2481 and UBACyT 20020170100256BA.
Vendramin acknowledges the support of NYU-ECNU Institute of Mathematical
Sciences at NYU Shanghai. Thanks go to Ferran Ced\'o for helpful comments.

\bibliographystyle{abbrv}
\bibliography{refs}

\end{document}